\crefname{hypothesis}{Hypothesis}{Hypotheses}
\crefname{fact}{Fact}{Facts}
\title{
  Efficient Stochastic BFGS methods Inspired by Bayesian Principles
  \thanks{Submitted to the editors on July 10, 2025.}
  }
\author{
  André Carlon
  \thanks{
    Department of Mathematics, RWTH Aachen University, Geb\"{a}ude-1953 1.OG, Pontdriesch 14-16, 161, 52062 Aachen, Germany. 
    (\email{agcarlon@gmail.com}).
  }
  \and Luis Espath
  \thanks{
    School of Mathematical Sciences, University of Nottingham, Nottingham, NG7 2RD, United Kingdom.
  }
  \and Raúl Tempone
  \thanks{
    King Abdullah University of Science \& Technology (KAUST), Computer, Electrical and Mathematical Sciences \& Engineering Division (CEMSE), Thuwal 23955-6900, Saudi Arabia. \\
    Department of Mathematics, RWTH Aachen University, Geb\"{a}ude-1953 1.OG, Pontdriesch 14-16, 161, 52062 Aachen, Germany. \\
    Alexander von Humboldt Professor in Mathematics for Uncertainty Quantification, RWTH Aachen University, Germany.
  }
}
\newfont{\tenbfsl}{cmbxti9 scaled 1200}
\newfont{\tenbbb}{msbm10}
\newfont{\svnbbb}{msbm8}
\newcommand{\Exp}[1]{\mathbb{E}\left[ #1 \right]}
\newcommand{\Expc}[2]{\mathbb{E}\left[ \left. #1 \right| #2 \right]}
\newcommand{\bs}[1]{\boldsymbol{#1}}
\newcommand{\cl}[1]{\mathcal{#1}}
\newcommand{\bb}[1]{\mathbb{#1}}
\newcommand{\opt}{\bs{x}^*}
\newcommand{\bigO}[1]{\cl{O}\left( #1 \right)}
\newcommand{\norm}[1]{\left\lVert {#1} \right\rVert}
\newcommand{\inner}[1]{\left\langle {#1} \right\rangle}
\newcommand{\y}{\bs{y}}
\newcommand{\s}{\bs{s}}
\newcommand{\x}{\bs{x}}
\newcommand{\w}{\bs{w}}
\newcommand{\e}{\bs{e}}
\newcommand{\tr}{\textrm{tr}\,}
\newtheorem{thm}{Theorem}
\newtheorem{cor}{Corollary}
\newtheorem{lem}{Lemma}
\newtheorem{prop}{Proposition}
\theoremstyle{remark}
\newcommand{\I}{\bs{I}}
\newcommand{\W}{\bs{W}}
\newcommand{\pk}{p_k}
\newcommand{\Hk}{\bs{H}_k}
\newcommand{\Hp}{\bs{H}_{k+1}}
\newcommand{\Bk}{\bs{B}_k}
\newcommand{\z}{\bs{z}}
\begin{document}

\maketitle

\begin{abstract}
  Quasi-Newton methods are ubiquitous in deterministic local search due to their efficiency and low computational cost.
  This class of methods uses the history of gradient evaluations to approximate second-order derivatives.
  However, only noisy gradient observations are accessible in stochastic optimization; thus, deriving quasi-Newton methods in this setting is challenging.
  Although most existing quasi-Newton methods for stochastic optimization rely on deterministic equations that are modified to circumvent noise, we propose a new approach inspired by Bayesian inference to assimilate noisy gradient information and derive the stochastic counterparts to standard quasi-Newton methods. 
  We focus on the derivations of stochastic BFGS and L-BFGS, but our methodology can also be employed to derive stochastic analogs of other quasi-Newton methods.
  The resulting stochastic BFGS (S-BFGS) and stochastic L-BFGS (L-S-BFGS) can effectively learn an inverse Hessian approximation even with small batch sizes.
  For a problem of dimension $d$, the iteration cost of S-BFGS is $\bigO{d^2}$, and the cost of L-S-BFGS is $\bigO{d}$.
  Numerical experiments with a dimensionality of up to $30,720$ demonstrate the efficiency and robustness of the proposed method.
\end{abstract}

\begin{keywords}
Stochastic optimization, quasi-Newton, Bayesian inference
\end{keywords}

\begin{MSCcodes}
  65K10,  
  90C15,  
  90C53   
\end{MSCcodes}

\section{Introduction}

Stochastic optimization refers to the optimization of an objective function using stochastic gradient estimates, which are noisy and unbiased approximations of the actual gradient.
In machine learning, the ability to address large-scale problems and the low cost of computing derivatives using back-propagation makes stochastic gradient descent (SGD) and its variants the standard methods for training models.
However, in some cases, objective functions can have much larger second derivatives in some directions than others, causing SGD to zigzag instead of moving directly to the local optimum.
In this setting, knowing second-order information can help SGD converge faster to local minima.
Naively computing Hessian matrices in the stochastic optimization setting by sampling might be unfeasible. 
A large sample size might be needed to obtain a good Hessian estimate, and solving a linear system to get a search direction has complexity $\bigO{d^3}$ for a $d$-dimensional problem.
One approach to ensuring the advantages of second-order methods without the extra cost of computing Hessians is using quasi-Newton methods.

Quasi-Newton methods are among the most successful methods for \emph{deterministic} local search for both convex and nonconvex optimization. 
The idea behind quasi-Newton methods is to use the history of gradient evaluations to construct an approximation of the Hessian matrix of the objective function, combining the low cost of first-order methods with the fast convergence of second-order methods. 
The Broyden-Fletcher-Goldfarb-Shanno (BFGS) method~\cite{broyden1970convergence,fletcher1970new,goldfarb1970family, shanno1970conditioning} is the best-known quasi-Newton method due to its superior performance in comparison to Davidon-Fletcher-Powell (DFP)~\cite{davidon1991variable} and symmetric rank-one update (SR1)~\cite[Section 6.2]{nocedal2006numerical}.
Liu and Nocedal~\cite{liu1989limited} developed a limited-memory version of BFGS (L-BFGS), improving the complexity of each optimization iteration from $\bigO{d^2}$ to $\bigO{d}$.

In the context of stochastic optimization, the absence of noiseless gradient observations renders quasi-Newton methods unsuitable for direct application, as the utilization of noisy estimates in quasi-Newton equations may lead to divergence.
This phenomenon is attributed to the propensity of gradient noise to manifest significantly on approximations of the Hessian and its inverse \cite{xie2020analysis}. 
This problem stems from the fundamental nature of quasi-Newton methods, which aim to satisfy secant equations precisely.
Consequently, in the stochastic setting, these methods inevitably attempt to fit noise. 
In addition, preconditioning the gradient with an inverse Hessian approximation may substantially amplify gradient noise, impeding convergence.

\subsection{Background}
Several approaches have addressed the difficulties of devising quasi-Newton methods for stochastic optimization. For example, Schraudolph, Yu, and G\"unter~\cite{schraudolph2007stochastic} modified the BFGS and L-BFGS equations to make them more robust to noise.
Further, Bordes and Pierre \cite{bordes2009sgd} built on the work of Schraudolph, Yu, and G\"unter, providing an analysis of their method.
In addition, Sohl-Dickstein, Poole, and Ganguli \cite{sohl2014fast} developed a quasi-Newton method for finite-sum minimization mantaining a Hessian approximation for each summand.
Byrd et al.~\cite{byrd2016stochastic} proposed a hybrid algorithm that combines SGD and BFGS, applying low-cost and noisy gradients for most iterations.
After a certain number of iterations, they compute a more precise gradient estimate that to update the inverse Hessian using the standard BFGS equation.
Additionaly, Moritz, Nishihara, and Jordan~\cite{moritz2016linearly} combined the approach by Byrd et al. with the stochastic variance reduced gradient estimator \cite{johnson2013accelerating} to obtain a linear-convergent algorithm in strongly convex smooth problems.

Among the new techniques to use quasi-Newton methods in stochastic optimization, the work by Berahas, Nocedal, and Tak\'a\v{c} \cite{berahas2016multi} is of interest.
They overlapped the samples from subsequent iterations to compute an inverse Hessian approximation using the BFGS equation.
However, this method requires large sample sizes to control the noise in the curvature pairs.
Bollapragada et al.~\cite{bollapragada2018progressive} progressively increased sample sizes to control the gradient noise using the deterministic L-BFGS update.
In addition, Gower, Goldfarb, and Richt\'arik~\cite{gower2016stochastic} proposed a novel method based on sketching. 
A direction is sampled according to an arbitrary rule, and the inverse Hessian update is forced to fit the actual Hessian action in this direction while moving as little as possible in a weighted Frobenius norm sense.
Further, Wang et al.~\cite{wang2017stochastic} modified the BFGS update to mitigate the effect of noise on the Hessian approximation and proposed a limited-memory version of this algorithm named Stochastic-damped L-BFGS (SdLBFGS).
More recently, Berahas et al.~\cite{berahas2022quasi},instead of using the optimization history to approximate a Hessian, sampled several points near the current iterate and computed the gradients differences between them, using these differences to compute an inverse Hessian approximation using standard L-BFGS equations.
In general, quasi-Newton approaches for stochastic optimization rely on adapting deterministic equations to the stochastic setting by increasing sample sizes, using specific sampling strategies, or slightly modifying to the equations to make them more robust.

To the authors' knowledge, only three approaches exist to devise quasi-Newton methods for stochastic optimization from first principles, two of which use Bayesian inference to model learning about the Hessian from curvature pair data~\cite{hennig2013fast, carlon2024approximating}.
The methods differ in how they model the prior and likelihood distributions, resulting in different quasi-Newton methods.
In the third work, by Li~\cite{li2017preconditioned}, the author derives different strategies for constructing preconditioning matrices by considering convergence improvement and noise amplification.

\subsection{Contributions}
This study follows the work by Carlon, Espath, and Tempone~\cite{carlon2024approximating} in the Bayesian modeling of prior and likelihood distributions.
However, instead of approximating the Hessian, our method seeks preconditioning matrices that satisfy secant equations while avoiding the noise amplification problem.
Moreover, in~\cite{carlon2024approximating}, the authors used log barriers to enforce eigenvalue constraints, whereas we do so by accepting/rejecting curvature pairs.
Thus, we are able to derive closed-form solutions for the derivation of quasi-Newton methods, avoiding the expensive optimization subproblem that the authors in~\cite{carlon2024approximating} solve to maximize the posterior distribution.
Specifically, We focus on deriving stochastic counterparts to BFGS (S-BFGS) and L-BFGS (L-S-BFGS).
However, the same methodology can be employed to derive stochastic analogues to DFP, good Broyden, and bad Broyden~\cite{nocedal2006numerical} by choosing appropriate weight matrices and prior and likelihood distributions.
These equations are a special case of quasi-Newton methods using weighted secant equations~\cite{gratton2015quasi}, in which each curvature pair is weighted by its precision.
The implementation of the derived S-BFGS and L-S-BFGS methods is straightforward and requires tuning only a few parameters to function effectively.

To validate our method, we compare L-S-BFGS against SdLBFGS~\cite{wang2017stochastic} and oLBFGS~\cite{mokhtari2015global} on training a logistic regression model on four different datasets.
In all cases, L-S-BFGS outperforms the baseline methods, demonstrating the efficiency and robustness of our approach.
The cost of L-S-BFGS increases linearly with the problem dimensionality; hence, L-S-BFGS is suitable for large-scale stochastic optimization problems.

\subsection{Problem formulation}
In what follows, we employ bold lowercase symbols for vectors, bold uppercase symbols for matrices, and regular lowercase symbols for scalars.

Let $f : \bb{R}^d \times \Omega \rightarrow \bb{R}$ be the objective function of a stochastic optimization problem,
\begin{equation}
    \label{eq:problem}
    \tag{P}
    \text{find }
    \bs{x}^* = \underset{\bs{x} \in \bb{R}^d}{\arg \min} \; \Exp{f(\bs{x}, \bs{\xi})},
\end{equation}
where $\bs{x}$ denotes the optimization variables and $\bs{\xi}$ representes a random vector.
One approach to solving \eqref{eq:problem} is using SGD,
\begin{equation}
    \bs{x}_{k+1} = \bs{x}_k - \eta_k \underbrace{\frac{1}{N} \sum_{n=1}^N \nabla_{\bs{x}}  f(\bs{x}_k, \bs{\xi}_{k, n})}_{\bs{\upsilon}_k},
\end{equation}
where each $\bs{\xi}_{k, n}$ is sampled independently with an identical distribution for each iteration, $\eta_k>0$ is the step size, and $N$ is the batch size.
The convergence of SGD depends on the problem characteristics and the choice of step sizes $\eta_k$.
The aim is to find a matrix $\Hk \in \bb{R}^{d \times d}$ that can be used to precondition the gradient estimate in SGD,
\begin{equation}
    \label{eq:psgd}
    \bs{x}_{k+1} = \bs{x}_k - \eta_k \Hk \bs{\upsilon}_k,
\end{equation}
hopefully improving its convergence.
In the spirit of quasi-Newton methods, information from curvature pairs generated during optimization is employed to build $\Hk$, namely $\s_k = \bs{x}_k - \bs{x}_{k-1}$ and $\y_k = \frac{1}{N} \sum_{n=1}^N( \nabla_{\bs{x}}  f(\bs{x}_k, \bs{\xi}_{k, n}) -  \nabla_{\bs{x}}  f(\bs{x}_{k-1}, \bs{\xi}_{k, n}))$.
A Bayesian formulation is applied to incorporate curvature information into a preconditioning matrix $\Hk$, where each pair has a precision scalar $\pk>0$ that encodes the dispersion of $\y_k$.
\section{Bayesian formulation of stochastic BFGS}

\subsection{Bayesian formulation of quasi-Newton methods}

We use Bayesian inference to build a posterior distribution of the Hessian inverse $\bs{H}$ as
\begin{equation} \label{eq:bayes}
  \pi(\bs{H} | \s_k, \y_k) \propto p(\y_k | \bs{H}, \s_k) \pi(\bs{H}),
\end{equation}
where $p(\y_k | \bs{H}, \s_k)$ is the likelihood of observing the curvature pair $(\s_k, \y_k)$ given a matrix $\bs{H}$ and $\pi(\bs{H})$ is a prior distribution of $\bs{H}$.
One approach to find an approximation of $\bs{H}$ is to use the \emph{maximum a posteriori} (MAP), the matrix that maximizes the posterior in \eqref{eq:bayes}.
Let us model the negative log prior distribution of $\bs{H}$ using a weighted Frobenius distance to the current Hessian inverse approximation $\Hk$,
\begin{align}
    - \log(\pi(\bs{H}))
    &= \frac{1}{2} \norm{\bs{H} - \bs{H}_k}_{F, \W_{pr}}^2 + C_{pr} \\
    &= \frac{1}{2} \norm{\sqrt{\W_{pr}}(\bs{H} - \bs{H}_k)\sqrt{\W_{pr}}}_{F}^2 + C_{pr},
\end{align}
where $\W_{pr} \in \bb{R}^{d \times d}$ is an appropriate positive-definite matrix such that the log prior is dimensionless and $C_{pr}$ is a constant that guarantees that the prior is a proper distribution.
This prior distribution selection is not ad hoc; $\pi(\bs{H})$ is a Gaussian-type distribution.
Moreover, this negative log prior is exactly the objective function used to derive the BFGS method~\cite{goldfarb1970family}.
Regarding the likelihood, we model the negative log likelihood of observing the curvature pair $(\s_k, \y_k)$ as
\begin{align}
    - \log(p(\y_k | \bs{H}, \s_k))
    &= \frac{1}{\rho} \norm{\s_k - \bs{H} \y_k}_{\W_l}^2 + C_l \\
    &=  \frac{1}{\rho} \norm{\sqrt{\W_l}(\s_k - \bs{H} \y_k)}^2 + C_l,
\end{align}
where $\W_l \in \bb{R}^{d \times d}$ is a positive-definite matrix, $\rho>0$ is a parameter to be tuned, and $C_l$ is a constant.
As with the prior distribution, the choices of $\W_l$, $\rho$, and $C_l$ must ensure that the likelihood is a proper probability distribution.

The problem of determining the current Hessian inverse approximation $\bs{H}_k$ as the \emph{maximum a posteriori}(MAP) can be written as
\begin{equation}\label{eq:opt_subproblem}
    \begin{aligned}
    \bs{H}_{k+1} = &\underset{\bs{H} \in \bb{R}^{d \times d}}{\arg \min}
    \left(
        \frac{1}{\rho} \norm{\s_k - \bs{H} \y_k}_{\W_l}^2
        +
        \frac{1}{2} \norm{\bs{H} - \bs{H}_k}_{F, \W_{pr}}^2
    \right) \\
    & \text{s.t. } \bs{H} = \bs{H}^\top.
    \end{aligned}
\end{equation}

Analyzing~\eqref{eq:opt_subproblem} reveals that the negative log posterior is a strongly convex quadratic function on $\bs{H}$.
Thus, this strongly convex quadratic programming problem with a linear constraint has a unique solution.
Figure~\ref{fig:bayesian_representation} graphically represents the Bayesian formulation for quasi-Newton methods.

\begin{figure}[t]
    \centering
    \includegraphics[width=\linewidth]{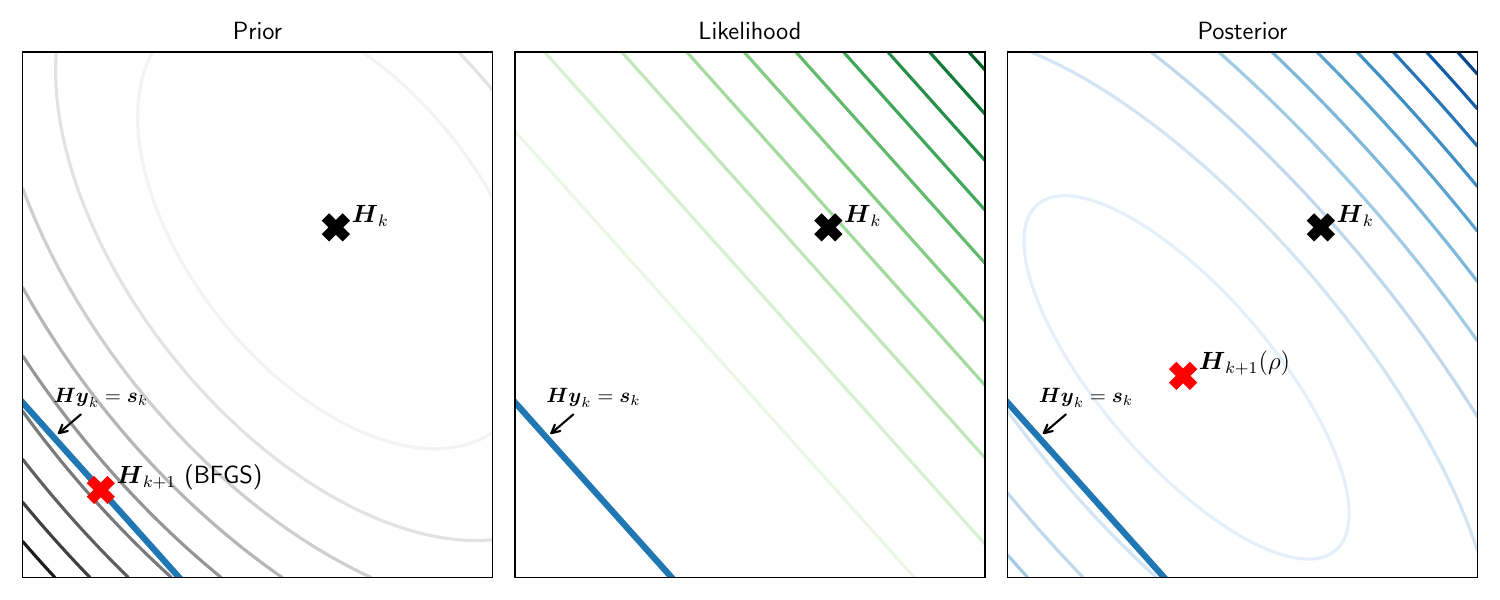}
    \caption{Graphical representation of the Bayesian formulation for quasi-Newton methods. 
    Left: Contour of the negative log prior distribution of $\bs{H}$, with a blue line representing the affine subspace of matrices satisfying the secant equation and a red $\times$ marking the BFGS update. 
    Center: Contour of the negative log-likelihood distribution. A larger confidence on the observed $\y_k$ indicates that it is more likely that the true Hessian is near the affine subspace $\bs{H} \y_k = \s_k$. 
    Right: Contour of the negative log posterior for a given $\rho$ and the $\Hp$ that minimizes it. 
    A larger $\rho$ results in $\Hp$ being closer to $\Hk$, whereas as $\rho \downarrow 0$, the new inverse Hessian approximation $\Hp$ converges to the one of BFGS.
    }
    \label{fig:bayesian_representation}
\end{figure}

\subsection{Derivation of the S-BFGS method}

As in deterministic quasi-Newton methods, different choices of the weight matrices result in different quasi-Newton methods.
Let $\W$ be any positive-definite matrix satisfying $\W \s_k = \y_k$.
Then, letting $\W_{pr} = \W$ and $\W_l = p_k \W$, where $p_k>0$ is a scalar representing the precision of the pair $(\s_k, \y_k)$, the solution of problem~\eqref{eq:opt_subproblem} furnishes a stochastic BFGS analogue, which we name S-BFGS.
If one exchanges $\bs{H}$ for $\bs{B}$, $\s_k$ for $\y_k$, and sets $\W$ such that $\W \y_k = \s_k$, the solution of problem~\ref{eq:opt_subproblem} results in a stochastic DFP method.
Here, we focus on the S-BFGS method.
The variable $\pk$ is set to be the inverse of the trace of the covariance matrix of $\y_k$. The trace of the covariance of $\y_k$ is the componentwise sum of the variances. 
This choice of $\W_{pr}$ ensures the prior distribution is valid because the log prior is dimensionless. 
Other choices for $\W_{pr}$, such as $\W_{pr}=\I$, may not be consistent. 
Similarly, the parameter $\rho$ is chosen to ensure that the likelihood remains a valid probability distribution, aligning with the probabilistic interpretation of the Bayesian framework.
In Proposition~\ref{prp:sbfgs_update} we derive the S-BFGS update.

\begin{prop} \label{prp:sbfgs_update}
    Let $\W \in \bb{R}^{d \times d}$ be any positive-definite matrix such that $\W \s_k = \y_k$.
    Then, the update of the stochastic BFGS (S-BFGS), defined as the solution of the minimization problem~\eqref{eq:opt_subproblem} with $\W_{pr}=\W$ and $\W_l=p_k \W$, can be written as
    \begin{align}
        \label{eq:sbfgs}
        \Hp = \Hk
        + \frac{
            1 + \frac{\y_k^\top \Hk \y_k}{\s_k^\top \y_k + \frac{\rho}{p_k}}
        }{
            \s_k^\top \y_k + \frac{\rho}{2 p_k}
        } 
        \s_k \s_k^\top
        - \frac{1}{\s_k^\top \y_k + \frac{\rho}{p_k }} 
        (\Hk \y_k \s_k^\top + \s_k \y_k^\top \Hk).
    \end{align}
\end{prop}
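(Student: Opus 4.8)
The plan is to remove the weight matrix by a congruence change of variables, which turns \eqref{eq:opt_subproblem} into a pure least-change problem in the Frobenius norm in which the secant data collapse onto a single vector; then to solve the resulting strongly convex quadratic (feasible set the subspace of symmetric matrices) in closed form and transform back.

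\textbf{Step 1 (reduction).} Since $\W$ is positive definite, set $\tilde{\bs{H}} = \W^{1/2}\bs{H}\W^{1/2}$, $\tilde{\bs{H}}_k = \W^{1/2}\Hk\W^{1/2}$, and $\bs{c} = \W^{1/2}\s_k$. From $\W\s_k = \y_k$ one gets $\W^{-1/2}\y_k = \W^{1/2}\s_k = \bs{c}$, hence $\W^{1/2}(\s_k - \bs{H}\y_k) = \bs{c} - \tilde{\bs{H}}\bs{c}$, while $\norm{\W^{1/2}(\bs{H}-\Hk)\W^{1/2}}_{F} = \norm{\tilde{\bs{H}}-\tilde{\bs{H}}_k}_{F}$. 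With $\W_{pr}=\W$ and $\W_l=p_k\W$, problem \eqref{eq:opt_subproblem} becomes
\[
  \min_{\tilde{\bs{H}}=\tilde{\bs{H}}^\top}\ \frac{p_k}{\rho}\norm{\bs{c}-\tilde{\bs{H}}\bs{c}}^2 + \frac{1}{2}\norm{\tilde{\bs{H}}-\tilde{\bs{H}}_k}_{F}^2 ,
\]
and since $\bs{H}\mapsto\tilde{\bs{H}}$ is a bijection on symmetric matrices, solving this and inverting the change of variables yields $\Hp$.

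\textbf{Step 2 (stationarity and rank-two structure).} Write $\tilde{\bs{H}}=\tilde{\bs{H}}_k+\bs{D}$ with $\bs{D}=\bs{D}^\top$, and $\bs{r}=\bs{c}-\tilde{\bs{H}}_k\bs{c}$. The objective is a strongly convex quadratic in $\bs{D}$, and because the feasible set is the linear subspace of symmetric matrices the minimizer is characterized by the vanishing of the orthogonal projection onto that subspace of the unconstrained gradient $\bs{D} - \tfrac{2p_k}{\rho}(\bs{r}-\bs{D}\bs{c})\bs{c}^\top$. Writing $\w = \bs{r}-\bs{D}\bs{c}$, this gives the symmetric rank-two form $\bs{D}=\tfrac{p_k}{\rho}(\w\bs{c}^\top+\bs{c}\w^\top)$. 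Substituting this back into $\w = \bs{r}-\bs{D}\bs{c}$ produces a linear relation among $\w$, $\bs{c}$ and the scalar $\alpha=\w^\top\bs{c}$; taking the inner product of that relation with $\bs{c}$ solves for $\alpha$, and hence for $\w$, in terms of $\bs{r}$, $\bs{c}$, $\norm{\bs{c}}^2$ and $\bs{r}^\top\bs{c}$ only.

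\textbf{Step 3 (back-substitution and simplification).} Transforming back through $\Hp=\Hk+\W^{-1/2}\bs{D}\W^{-1/2}$ and using $\W^{-1/2}\bs{c}=\s_k$, $\W^{-1/2}\bs{r}=\s_k-\Hk\y_k$, together with $\norm{\bs{c}}^2 = \s_k^\top\W\s_k = \s_k^\top\y_k$ and $\bs{r}^\top\bs{c} = (\s_k-\Hk\y_k)^\top\y_k = \s_k^\top\y_k-\y_k^\top\Hk\y_k$, the rank-two term becomes a linear combination of $\s_k\s_k^\top$ and $\Hk\y_k\s_k^\top+\s_k\y_k^\top\Hk$. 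Collecting the scalar coefficients, which depend only on $\s_k^\top\y_k$, $\y_k^\top\Hk\y_k$, $p_k$ and $\rho$, and simplifying (using $1+\tfrac{p_k}{\rho}\s_k^\top\y_k = \tfrac{p_k}{\rho}(\s_k^\top\y_k+\tfrac{\rho}{p_k})$ and $1+\tfrac{2p_k}{\rho}\s_k^\top\y_k = \tfrac{2p_k}{\rho}(\s_k^\top\y_k+\tfrac{\rho}{2p_k})$) produces exactly \eqref{eq:sbfgs}; strong convexity (noted after \eqref{eq:opt_subproblem}) makes this stationary point the unique minimizer, and the formula is manifestly symmetric.

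The main obstacle is Step 2: imposing the symmetry constraint correctly, so that the optimality condition carries the symmetrization $\tfrac12(\w\bs{c}^\top+\bs{c}\w^\top)$ rather than a naive rank-one term, and then unwinding the implicit equation for $\w$. Step 3 is essentially bookkeeping, although some care is needed to massage the coefficients into the precise form displayed in \eqref{eq:sbfgs}.
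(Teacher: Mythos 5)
Your argument is correct: the change of variables $\tilde{\bs{H}}=\W^{1/2}\bs{H}\W^{1/2}$ does reduce \eqref{eq:opt_subproblem} (with $\W_{pr}=\W$, $\W_l=p_k\W$) to an unweighted least-change problem in which the secant data collapse to the single vector $\bs{c}=\W^{1/2}\s_k=\W^{-1/2}\y_k$, the projected stationarity condition gives $\bs{D}=\tfrac{p_k}{\rho}(\w\bs{c}^\top+\bs{c}\w^\top)$ with $\w=\bs{r}-\bs{D}\bs{c}$, and solving the resulting two-by-two linear relation for $\w^\top\bs{c}$ and $\w$ and mapping back with $\norm{\bs{c}}^2=\s_k^\top\y_k$, $\bs{r}^\top\bs{c}=\s_k^\top\y_k-\y_k^\top\Hk\y_k$ reproduces exactly the coefficients $a$ and $b$ of \eqref{eq:sbfgs} (I checked the algebra; the two simplification identities you quote are the right ones). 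The route, however, differs from the paper's. The paper writes the first-order optimality condition in the original variables, multiplies by $\W^{-1}$ on both sides using $\W^{-1}\y_k=\s_k$ to obtain a continuous Lyapunov equation, invokes uniqueness of its solution, and then \emph{verifies} the rank-two ansatz $\Hp=\Hk+a\s_k\s_k^\top+b(\Hk\y_k\s_k^\top+\s_k\y_k^\top\Hk)$ by matching coefficients. Your derivation is constructive: the symmetric rank-two structure of the update is \emph{derived} from the projection of the gradient onto the symmetric subspace rather than posited, and no Lyapunov-uniqueness fact is needed (uniqueness comes directly from strong convexity of the transformed quadratic over the linear subspace of symmetric matrices). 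What the paper's route buys is brevity once the ansatz is accepted, plus the Lyapunov framing it reuses to assert positive-definiteness of the solution; what yours buys is an explanation of why the update is rank two and a derivation that transfers verbatim to other weight choices (e.g.\ the DFP variant mentioned after the proposition). The only small points to make explicit when writing it up are that the symmetry constraint is preserved by the congruence (so the two feasible sets correspond), and that the scalar $1+\tfrac{2p_k}{\rho}\norm{\bs{c}}^2$ you divide by is positive, which is immediate since $\norm{\bs{c}}^2=\s_k^\top\W\s_k\ge 0$ and $\rho,p_k>0$.
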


\begin{proof}
    From the first-order optimality condition of the problem in \eqref{eq:opt_subproblem}, $\Hp$ must satisfy
    \begin{multline}
         \pk \left(\W (\Hp \y_k - \s_k) \y_k^\top + \y_k (\Hp \y_k - \s_k)^\top \W \right) \\
        + \rho \left(\W (\Hp - \Hk) \W\right) = \bs{0}_d,
    \end{multline}
    where $\bs{0}_d \in \bb{R}^{d \times d}$ is a matrix filled with zeros.
    Multiplying both sides from the left and right by $\W^{-1}$ and noting that $\W^{-1} \y_k = \s_k$ yields the following continuous Lyapunov equation,
    \begin{align} \label{eq:lyap_eq}
        \Hp \left(
            \y_k \s_k^\top + \frac{\rho}{2 p_k } \I_d
        \right)
        +
        \left(
            \s_k \y_k^\top + \frac{\rho}{2 p_k } \I_d
        \right)
        \Hp
        = 2 \s_k \s_k^\top + \frac{\rho}{p_k } \Hk.
    \end{align}
    The matrices $\y_k \s_k^\top + \frac{\rho}{2 p_k } \I_d$ and $2 \s_k \s_k^\top + \frac{\rho}{p_k } \Hk$ are positive-definite; hence, this system has a unique solution, and it is positive-definite.
    Let us assume an update of the form
    \begin{align}
        \label{eq:H_def}
        \Hp = \Hk + a \s_k \s_k^\top + b (\Hk \y_k \s_k^\top + \s_k \y_k^\top \Hk).
    \end{align}
    Plugging $\Hp$ into \eqref{eq:lyap_eq} and solving for the constants $a$ and $b$ finishes the proof.
\end{proof}

Notice that letting the precision of the curvature pair $p_k \rightarrow \infty$ recovers the deterministic BFGS update.
Thus, the deterministic BFGS method is a particular case of S-BFGS arising when the noise in the gradient curvature approaches zero.
Similarly, we recover the BFGS if we let $\rho \downarrow 0$.
This choice of $\W_{pr}$ and $\W_l$ results in a special case of BFGS with weighted secant equations~\cite{gratton2015quasi}, where the weights are given by the precision $p_k$.

Algorithm~\ref{alg:sbfgs} presents pseudocode for the S-BFGS.

\begin{algorithm}
    \caption{Pseudocode for the stochastic BFGS}
    \label{alg:sbfgs}
    \begin{algorithmic}
        \Procedure{S-BFGS}{$\Hk$, $\s_k$, $\y_k$, $p_k$, $\rho$}
        \State $a_k \gets \frac{
            1 + \frac{\y_k^\top \Hk \y_k}{\s_k^\top \y_k + \frac{\rho}{p_k }}
            }{
                \s_k^\top \y_k + \frac{\rho}{2 p_k }
                }$
        \State $b_k \gets -\frac{1}{\y_k^\top \s_k + \frac{\rho}{p_k }}$
        \State $\Hp \gets \Hk + a_k \s_k \s_k^\top + b_k (\Hk \y_k \s_k^\top + \s_k \y_k^\top \Hk)$
        \State \Return $\Hp$
        \EndProcedure
    \end{algorithmic}
\end{algorithm}

The S-BFGS method has memory allocation and processing cost that are $\bigO{d^2}$.
In order to tackle large-scale problems, we derive a limited-memory variation of the S-BFGS in the next section.

\subsection{Limited-memory Stochastic BFGS}
The key idea behind L-BFGS is to compute the action of the Hessian inverse approximation on any given vector directly, avoiding assembling the full matrix~\cite{liu1989limited}.
Here we follow a similar approach.
Let $\z \in \bb{R}^d$ be any arbitrary vector.
We are interested in devising a method to directly compute $\Hp \z$ without assembling $\Hp$.
In the same spirit as L-BFGS, $r$ curvature pairs are kept in memory and used to approximate $\Hp \z$.
Algorithm \ref{alg:lsbfgs} presents the pseudocode for the limited-memory S-BFGS.
Because L-S-BFGS has two nested loops over the curvature pairs, its cost is $\bigO{d r^2}$.

\begin{algorithm}[h]
    \caption{Limited-memory version of stochastic BFGS (L-S-BFGS)}
    \label{alg:lsbfgs}
    \begin{algorithmic}
        \Procedure{L-S-BFGS}{$\left\{(\s_\ell, \y_\ell, p_\ell)\right\}_{\ell=k-r+1}^k$, $\z$, $\bs{H}_0$, $\rho$}
        \State $\bs{w} \gets \bs{H}_0 \z$
        \Comment{$\bs{w}$ is a placeholder for $\Hp \z$}
        \For{$i=k-r+1$ to $k$}
            \State $\bs{v}_i \gets \bs{H}_0 \y_i$
            \Comment{$\bs{v}_i$ is a placeholder for $\bs{H}_{i} \y_i$}
            \For{$j=k-r$ to $i-1$}
                \State $\bs{v}_i \gets \bs{v}_i + 
                a_j \s_j \s_j^\top \y_i
                + b_j (\bs{v}_j \s_j^\top \y_i + \s_j \bs{v}_j^\top \y_i)$
            \EndFor
            \State $a_i \gets \frac{
                1 + \frac{\y_i^\top \bs{v}_i}{\s_i^\top \y_i + \frac{\rho}{p_i }}
                }{
                    \s_i^\top \y_i + \frac{\rho}{2 p_i }
                    }$
            \State $b_i \gets -\frac{1}{\y_i^\top \s_i + \frac{\rho}{p_i }}$
            \State $\bs{w} \gets \bs{w} + a_i \s_i \s_i^\top \bs{z}
            + b_i (\bs{v}_i \s_i^\top \bs{z} + \s_i \bs{v}_i^\top \bs{z})$
        \EndFor
    \State \Return $\bs{w}$
    \EndProcedure
    \end{algorithmic}
\end{algorithm}

In addition to the reduced memory and processing costs, L-S-BFGS has an advantage over S-BFGS in terms of forgetting old data; S-BFGS must sometimes be restarted, whereas L-S-BFGS does not.

\subsection{Curvature condition for stochastic optimization}

Quasi-Newton methods for stochastic optimization encounter two significant challenges.
First, these methods must obtain accurate information on the Hessian and its inverse from noisy gradient evaluations.
Second, they must avoid amplifying the gradient noise, which can cause the optimization process to diverge, particularly when the preconditioning matrix has large eigenvalues. 
To prevent this, we propose a rule for selecting curvature pairs that avoids large eigenvalues for the inverse Hessian, thereby avoiding noise amplification.
The proposed curvature condition is stronger than the curvature condition of deterministic quasi-Newton methods.
While deterministic quasi-Newton methods often accept a curvature pair $(\s_k, \y_k)$ if it satisfies $\y_k^\top \s_k > 0$, we only accept the curvature pair if
\begin{equation}
    \label{eq:curv_condition}
    \y_k^\top \s_k \ge m \norm{\s_k}^2,
\end{equation}
for a parameter $m>0$.
The intuition behind \eqref{eq:curv_condition} is that a function $\tilde{F}: \bb{R}^d \rightarrow \bb{R}$ being $m$-convex is equivalent to
\begin{equation}
    \inner{
        \nabla \tilde{F}(\bs{x}) - \nabla \tilde{F}(\bs{y}),
        \bs{x} - \bs{y}}
    \ge
    m \norm{\bs{x} - \bs{y}}^2 ,
\end{equation}
for all $\bs{x}, \bs{y} \in \bb{R}^d$.
Moreover, if $\tilde{F}$ is also twice differentiable, $(\nabla^2 \tilde{F}(\bs{x}))^{-1} \preceq \nicefrac{1}{m}$ for all $\bs{x}$~\cite{nesterov2013introductory}.
Although a large value of $m$ might improve the stability of S-BFGS and S-L-BFGS by avoiding larger eigenvalues in $\Hk$, it also results in data loss as many curvature pairs are discarded.
The choice of $m$ depends on the noise level on the gradient evaluations and the precision sought after in the optimum found.
Moreover, a large step size might require a larger, more conservative $m$.
In practical numerical examples, we have found advantageous to maintain a large step size and control the stability of the methods by increasing $m$ and $\rho$.

In addition to controlling the largest eigenvalues of $\bs{H}$, its smallest eigenvalues can be controlled using the following extra curvature condition,
\begin{equation}
    \label{eq:extra_curv_condition}
    \y_k^\top \s_k \le M \norm{\s_k}^2,
\end{equation}
for $M>m>0$
Although not as important as the condition in~\eqref{eq:curv_condition}, iteratively controlling the smallest eigenvalues of the preconditioning matrix might avoid optimization steps that are too small in some directions.
This extra curvature condition might be helpful when the Lipschitz smoothness of the objective function $L$ is known; therefore, $m$ can be set to $L$.

Next, we establish several properties of our S-BFGS update under the proposed curvature conditions.
Many arguments in Lemmas~\ref{lem:pd} and \ref{lem:bounds} and in Corollary~\ref{cor:eigen_bounds} are adapted from \cite{byrd1989tool} to fit our algorithm.

\begin{lem}[Properties of the S-BFGS update]
    \label{lem:pd}
    If $\Hk$ is positive definite, the update of S-BFGS~\eqref{eq:sbfgs} with curvature condition $m \norm{\s_k}^2 \le \y_k^\top \s_k$ generates a matrix $\Hp$ that is also positive definite.
\end{lem}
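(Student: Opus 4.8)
The plan is to adapt the classical positive-definiteness proof of the deterministic BFGS inverse update, tracking how the shifts $\tfrac{\rho}{2\pk}$ and $\tfrac{\rho}{\pk}$ appearing in~\eqref{eq:sbfgs} enter the completion of squares. Throughout, write $c := \s_k^\top\y_k$ and $\tau := \rho/\pk > 0$. The curvature condition $m\norm{\s_k}^2 \le \y_k^\top\s_k$ yields $c \ge 0$, so the denominators $c+\tfrac{\tau}{2}$ and $c+\tau$ occurring in~\eqref{eq:sbfgs} are strictly positive and the update is well defined.

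First I would fix an arbitrary $\z \in \bb{R}^d$ and expand $\z^\top\Hp\z$ using~\eqref{eq:sbfgs}. Since $\Hk$ is positive definite, it has a symmetric positive-definite square root; setting $\bs{a} := \Hk^{1/2}\z$ and $\bs{b} := \Hk^{1/2}\y_k$ gives $\z^\top\Hk\z = \norm{\bs{a}}^2$, $\y_k^\top\Hk\y_k = \norm{\bs{b}}^2$, and $\z^\top\Hk\y_k = \bs{a}^\top\bs{b}$. Writing $t := \s_k^\top\z$ and using $b_k = -1/(c+\tau)$, the expansion and a completion of squares in $\bs{a}$ give
\[
  \z^\top\Hp\z
  \;=\; \norm{\bs{a}}^2 - \frac{2\,(\bs{a}^\top\bs{b})\,t}{c+\tau} + a_k\,t^2
  \;=\; \norm{\bs{a} - \tfrac{t}{c+\tau}\,\bs{b}}^2 + \Bigl(a_k - \tfrac{\norm{\bs{b}}^2}{(c+\tau)^2}\Bigr)t^2 ,
\]
with $a_k = \dfrac{c+\tau+\norm{\bs{b}}^2}{(c+\tau)\bigl(c+\tfrac{\tau}{2}\bigr)}$ as in Algorithm~\ref{alg:sbfgs}.

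The crux is to show that the scalar coefficient of $t^2$ is strictly positive, i.e. $a_k(c+\tau)^2 > \norm{\bs{b}}^2$. A short computation gives
\[
  a_k(c+\tau)^2 \;=\; \frac{c+\tau}{\,c+\tfrac{\tau}{2}\,}\,\bigl(c+\tau+\norm{\bs{b}}^2\bigr)
  \;\ge\; c+\tau+\norm{\bs{b}}^2 \;>\; \norm{\bs{b}}^2 ,
\]
where the first inequality uses $c+\tau \ge c+\tfrac{\tau}{2} > 0$ (since $\tau > 0$) and the second uses $c+\tau > 0$. This is the only step that genuinely departs from the deterministic argument, and it is exactly where the mismatch between the $\tfrac{\rho}{2\pk}$ and $\tfrac{\rho}{\pk}$ shifts is used; I expect it to be the main obstacle.

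Finally, I would note that both summands in the displayed identity are nonnegative, so $\z^\top\Hp\z \ge 0$ for every $\z$. If $\z^\top\Hp\z = 0$, the strict positivity of the $t^2$-coefficient forces $t = \s_k^\top\z = 0$, whence the identity reduces to $\norm{\bs{a}}^2 = \z^\top\Hk\z = 0$, which implies $\z = \bs{0}$ because $\Hk \succ 0$. Therefore $\Hp$ is positive definite. Everything except the coefficient inequality is the standard rank-two update bookkeeping, carried out exactly as in the BFGS case (cf.~\cite{byrd1989tool}).
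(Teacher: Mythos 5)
Your proposal is correct and follows essentially the same route as the paper: your completion of squares is exactly the paper's factorization $\Hp = (\bs{I} + b\,\s_k\y_k^\top)\Hk(\bs{I} + b\,\y_k\s_k^\top) + (a - b^2\,\y_k^\top\Hk\y_k)\,\s_k\s_k^\top$ read off on the quadratic form $\z^\top\Hp\z$, and your inequality $a_k(c+\tau)^2 > \norm{\bs{b}}^2$ is the same key fact $a - b^2\,\y_k^\top\Hk\y_k > 0$ that the paper establishes, followed by the same "both terms cannot vanish simultaneously" conclusion.
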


\begin{proof}
    Let the update of S-BFGS be written as
    \begin{align}
        \Hp = (I + b \s_k \y_k^\top) \Hk (I + b \y_k \s_k^\top) + (a - b^2 \y_k^\top \Hk \y_k) \s_k \s_k^\top.
    \end{align}
    Next, we prove that $a - b^2 \y_k^\top \Hk \y_k >0$,
    \begin{align}
        a - b^2 \y_k^\top \Hk \y_k
        &= \frac{
            1 + \frac{\y_k^\top \Hk \y_k}{\s_k^\top \y_k + \frac{\rho}{p_k }}
            }{
                (\s_k^\top \y_k + \frac{\rho}{2 p_k})
            }
            - \frac{\y_k^\top \Hk \y_k}{(\s_k^\top \y_k + \frac{\rho}{p_k})^2} \\
        &>
        \frac{
            1 + \frac{\y_k^\top \Hk \y_k}{\s_k^\top \y_k + \frac{\rho}{p_k }}
            }{
                (\s_k^\top \y_k + \frac{\rho}{p_k})
            }
            - \frac{\y_k^\top \Hk \y_k}{(\s_k^\top \y_k + \frac{\rho}{p_k})^2} \\
        \label{eq:upper_bound}
        &= \frac{1}{\s_k^\top \y_k + \frac{\rho}{p_k}} > 0,
    \end{align}
    where the curvature condition is used in the last step. For any vector $\z$,
    \begin{align}
        \label{eq:psd}
        \z^\top \Hp \z = \w^\top \Hk \w + (a - b^2 \y_k^\top \Hk \y_k) (\z^\top \s_k)^2 > 0,
    \end{align}
    where $\w = \z + b \y_k \s_k^\top \z$, proving the positive-definiteness of $\Hp$.
    It is not possible for both terms on the right-hand side to vanish simultaneously because, if $\z^\top \s_k = 0$, then $\w=\z$.
\end{proof}

\begin{lem}
    \label{lem:bounds}
    For the update of S-BFGS~\eqref{eq:sbfgs} with curvature conditions $m \norm{\s_k}^2 \le \y_k^\top \s_k \le M \norm{\s_k}^2$, $\Hp$ has its trace bounded above as
    \begin{align}
        \tr(\Hp)
        \le \tr(\Hk) + 
        \left(
        \frac{
            1 + \frac{\y_k^\top \Hk \y_k}{m \norm{\s_k}^2 + \frac{\rho}{p_k}}
        }{
            m \norm{\s_k}^2 + \frac{\rho}{2 p_k}
        } 
        \right)
        \norm{\s_k}^2
        - 
        \frac{2 \s_k^\top \Hk \y_k}{M \norm{\s_k}^2 + \frac{\rho}{p_k }},
    \end{align}
    and its determinant is bounded below as
    \begin{align}
        \det(\Hp) 
        &> \det(\Hk) \left(
            \frac{\rho^2}{p_k^2\left(\s_k^\top \y_k + \frac{\rho}{p_k}\right)^2}
            +\frac{\s_k^\top \Bk \s_k}{\s_k^\top \y_k + \frac{\rho}{p_k}}\right).
    \end{align}
\end{lem}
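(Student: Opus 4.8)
The plan is to prove the two bounds separately, reusing the structural decomposition of the S-BFGS update already established in the proof of Lemma~\ref{lem:pd}.

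\textbf{Trace bound.} First I would take the trace of the update \eqref{eq:sbfgs} directly. Using linearity of the trace and the cyclic property, $\tr(\s_k\s_k^\top)=\norm{\s_k}^2$, $\tr(\Hk\y_k\s_k^\top)=\tr(\s_k\y_k^\top\Hk)=\s_k^\top\Hk\y_k$, so
\begin{align}
  \tr(\Hp)=\tr(\Hk)+a_k\norm{\s_k}^2-\frac{2\s_k^\top\Hk\y_k}{\s_k^\top\y_k+\frac{\rho}{p_k}},
\end{align}
with $a_k$ as in Algorithm~\ref{alg:sbfgs}. It then remains to bound the last two terms using the two-sided curvature condition $\s_k^\top\y_k=m\norm{\s_k}^2$ (the equality forced when both \eqref{eq:curv_condition} and \eqref{eq:extra_curv_condition} hold). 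The coefficient $a_k$ is increasing in $\y_k^\top\Hk\y_k$ and one checks it is decreasing in $\s_k^\top\y_k$, so substituting $\s_k^\top\y_k=m\norm{\s_k}^2$ and keeping $\y_k^\top\Hk\y_k$ as is produces the stated numerator; similarly the subtracted term is handled by replacing $\s_k^\top\y_k$ with $m\norm{\s_k}^2$ in its denominator. (One must be slightly careful with the sign of $\s_k^\top\Hk\y_k$, since it need not be positive; if the sign works against us the inequality may need the crude bound or an absolute value, which is the one place to watch.)

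\textbf{Determinant bound.} Here I would start from the congruence-type decomposition
\begin{align}
  \Hp=(\I+b_k\s_k\y_k^\top)\Hk(\I+b_k\y_k\s_k^\top)+(a_k-b_k^2\,\y_k^\top\Hk\y_k)\,\s_k\s_k^\top
\end{align}
from Lemma~\ref{lem:pd}. Since the rank-one correction $(a_k-b_k^2\y_k^\top\Hk\y_k)\s_k\s_k^\top$ is positive semidefinite (shown in Lemma~\ref{lem:pd}), adding it to a positive-definite matrix only increases the determinant, so $\det(\Hp)>\det\!\big((\I+b_k\s_k\y_k^\top)\Hk(\I+b_k\y_k\s_k^\top)\big)=\det(\Hk)\,\det(\I+b_k\y_k\s_k^\top)^2$. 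By the matrix determinant lemma $\det(\I+b_k\y_k\s_k^\top)=1+b_k\s_k^\top\y_k=1-\frac{\s_k^\top\y_k}{\s_k^\top\y_k+\frac{\rho}{p_k}}=\frac{\rho/p_k}{\s_k^\top\y_k+\frac{\rho}{p_k}}$, which gives $\det(\Hp)>\det(\Hk)\,\frac{\rho^2}{p_k^2(\s_k^\top\y_k+\frac{\rho}{p_k})^2}$. To reach the claimed sharper bound with the extra $\frac{\s_k^\top\Bk\s_k}{\s_k^\top\y_k+\frac{\rho}{p_k}}$ term, I would instead \emph{not} discard the rank-one part but apply the matrix determinant lemma once more: writing $\Hp=\Hk^{1/2}(\I+M)\Hk^{1/2}$ and tracking the rank-two perturbation, or more directly using the identity $\det(A+\sigma\, vv^\top)=\det(A)(1+\sigma\, v^\top A^{-1}v)$ with $A=(\I+b_k\s_k\y_k^\top)\Hk(\I+b_k\y_k\s_k^\top)$, $v=\s_k$, and $\sigma=a_k-b_k^2\y_k^\top\Hk\y_k$. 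One computes $A^{-1}\s_k$ using $(\I+b_k\y_k\s_k^\top)^{-1}\s_k=\frac{\s_k}{1+b_k\s_k^\top\y_k}$ and $\Bk=\Hk^{-1}$, which after simplification (using the value of $\sigma$ from \eqref{eq:upper_bound}, namely $\sigma>\frac{1}{\s_k^\top\y_k+\frac{\rho}{p_k}}$) collapses to the stated expression.

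\textbf{Main obstacle.} The trace part is routine once the monotonicity of $a_k$ in its arguments is checked. The real work is the determinant bound: getting the precise second term requires carefully applying the matrix determinant lemma to the congruence-transformed matrix $A$ and simplifying $\s_k^\top A^{-1}\s_k$ in terms of $\s_k^\top\Bk\s_k$ and $\s_k^\top\y_k$, then feeding in the strict lower bound on $\sigma=a_k-b_k^2\y_k^\top\Hk\y_k$ from \eqref{eq:upper_bound} to turn an equality into the claimed strict inequality. Keeping track of which steps are equalities versus strict inequalities (the strictness comes from \eqref{eq:upper_bound}) is where care is needed.
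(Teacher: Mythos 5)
Your proof is correct and, in substance, matches the paper's. The trace bound is handled identically: take the trace of \eqref{eq:sbfgs} using linearity and $\tr(\s_k\s_k^\top)=\norm{\s_k}^2$, $\tr(\Hk\y_k\s_k^\top)=\s_k^\top\Hk\y_k$, then substitute the curvature condition into the denominators; the paper performs this substitution without discussing the sign of $\s_k^\top\Hk\y_k$, so your caveat is apt (with the two-sided condition read literally, $\s_k^\top\y_k=m\norm{\s_k}^2$ and the step is an equality, which is presumably what is intended). For the determinant, the paper writes $\det(\Hp)=\det(\Hk)\det\left(\I+a\Bk\s_k\s_k^\top+b(\y_k\s_k^\top+\Bk\s_k\y_k^\top\Hk)\right)$ and evaluates this with Pearson's rank-two determinant identity, while you instead start from the Lemma~\ref{lem:pd} factorization and apply the matrix determinant lemma twice (to the congruence factor and then to the rank-one correction); both computations collapse to the same identity $\det(\Hp)=\det(\Hk)\left[(1+b\y_k^\top\s_k)^2+\s_k^\top\Bk\s_k\,(a-b^2\y_k^\top\Hk\y_k)\right]$, after which \eqref{eq:upper_bound} and $1+b\s_k^\top\y_k=\frac{\rho/p_k}{\s_k^\top\y_k+\rho/p_k}$ give the stated strict bound, exactly as you describe. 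One small slip: the inverse you quote should read $(\I+b_k\s_k\y_k^\top)^{-1}\s_k=\s_k/(1+b_k\y_k^\top\s_k)$; with the rank-one factor ordered as you wrote it ($\y_k\s_k^\top$) the identity is false, though it is the correctly ordered factor that actually enters $\s_k^\top A^{-1}\s_k$, so your conclusion is unaffected.
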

\begin{proof}
    The trace is a linear operator, $\tr(\bs{A} + \bs{B}) = \tr(\bs{A}) + \tr(\bs{B})$; thus, using the curvature conditions yields
    \begin{align}
        \tr(\Hp)
        & = \tr(\Hk) + 
        \left(
        \frac{
            1 + \frac{\y_k^\top \Hk \y_k}{\s_k^\top \y_k + \frac{\rho}{p_k}}
        }{
            \s_k^\top \y_k + \frac{\rho}{2 p_k}
        } 
        \right)
        \norm{\s_k}^2
        - 
        \frac{2 \s_k^\top \Hk \y_k}{\s_k^\top \y_k + \frac{\rho}{p_k }} \\
        & \le \tr(\Hk) + 
        \left(
        \frac{
            1 + \frac{\y_k^\top \Hk \y_k}{m \norm{\s_k}^2 + \frac{\rho}{p_k}}
        }{
            m \norm{\s_k}^2 + \frac{\rho}{2 p_k}
        } 
        \right)
        \norm{\s_k}^2
        - 
        \frac{2 \s_k^\top \Hk \y_k}{M \norm{\s_k}^2 + \frac{\rho}{p_k }}.
    \end{align}
    First, we consider the following identity to bound the determinant,
    \begin{align}
    \det(\I + \bs{v}_1 \bs{v}_2^\top + \bs{v}_3 \bs{v}_4^\top) = (1 + \bs{v}_2^\top \bs{v}_1)(1 + \bs{v}_4^\top \bs{v}_3) - \bs{v}_4^\top \bs{v}_1 \bs{v}_2^\top \bs{v}_3,
    \end{align}
    which was also used in~\cite{pearson1969variable}.
    Then, from \eqref{eq:H_def},
    \begin{align}
        \det(\Hp) 
        &= \det(\Hk)\det(\I + a \Bk \s_k \s_k^\top + b(\y_k \s_k^\top + \Bk \s_k \y_k^\top \Hk)) \\
        &= \det(\Hk)\det(\I + \Bk \s_k (a \s_k^\top + b \y_k^\top \Hk) + b\y_k \s_k^\top) \\
        &= 
        \begin{multlined}
            \det(\Hk)
            [
            (1 + (a \s_k^\top + b \y_k^\top \Hk) \Bk \s_k)(1 + b \s_k^\top \y_k)
            \\
            -
            b \s_k^\top \Bk \s_k (a \s_k^\top + b \y_k^\top \Hk) \y_k
            ] 
        \end{multlined}
        \\
        &= \det(\Hk)
        \left[
            (1 + b \y_k^\top \s_k)^2 + \s_k^\top \Bk \s_k (a - b^2 \y_k^\top \Hk \y_k)
        \right].
    \end{align}
    Using \eqref{eq:upper_bound} and substituting $a$ and $b$ concludes the proof.
\end{proof}

\begin{cor}[Bounds on the eigenvalues of preconditioning matrices]
    \label{cor:eigen_bounds}
If the assumptions of Lemma~\ref{lem:bounds} are satisfied, for a finite number $k+1$ of iterations of S-BFGS, constants $0 < \tilde{\mu}, \tilde{L} < +\infty$ exist such that $(\tilde{L})^{-1} \preceq \bs{H}_i \preceq (\tilde{\mu})^{-1}$ for $i=0,1,\dots,k+1$.
\end{cor}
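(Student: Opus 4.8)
The plan is to argue by finite induction on the iteration index $i$, combining Lemma~\ref{lem:pd} with the trace and determinant estimates of Lemma~\ref{lem:bounds}, following the pattern of the classical argument in~\cite{byrd1989tool}. I take $\bs{H}_0 \succ 0$ as part of the setup. Starting from this, Lemma~\ref{lem:pd} applied at each step gives $\bs{H}_i \succ 0$ for every $i = 0, 1, \dots, k+1$, so that each $\bs{H}_i$ has strictly positive eigenvalues and satisfies the elementary inequalities $0 < \lambda_{\min}(\bs{H}_i) \le \lambda_{\max}(\bs{H}_i) \le \tr(\bs{H}_i)$ and $\det(\bs{H}_i) \le \lambda_{\min}(\bs{H}_i)\,\lambda_{\max}(\bs{H}_i)^{d-1}$; these two inequalities are the bridges that turn the trace/determinant bounds of Lemma~\ref{lem:bounds} into eigenvalue bounds.

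For the upper bound I would feed the trace estimate of Lemma~\ref{lem:bounds} into a recursion. Bounding the quadratic forms appearing in the increment by $\y_i^\top \bs{H}_i \y_i \le \lambda_{\max}(\bs{H}_i)\norm{\y_i}^2$ and $|\s_i^\top \bs{H}_i \y_i| \le \lambda_{\max}(\bs{H}_i)\norm{\s_i}\,\norm{\y_i}$, and then using the curvature condition $m\norm{\s_i}^2 \le \y_i^\top \s_i$ together with a bound $\norm{\y_i} \le L\norm{\s_i}$ coming from $L$-smoothness of the sampled gradients (or, for a fixed realized run, simply the finiteness of the curvature pairs) to cancel the factors $\norm{\s_i}$ and $\norm{\y_i}$, the estimate of Lemma~\ref{lem:bounds} collapses to a linear recursion $\tr(\bs{H}_{i+1}) \le (1+\gamma)\,\tr(\bs{H}_i) + c$ with nonnegative constants $\gamma, c$ depending only on $m$, $L$, $\rho$ and the $p_i$. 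Because there are only $k+1$ steps, unrolling this recursion yields a finite bound $\tr(\bs{H}_i) \le M < +\infty$ valid for all $i \le k+1$, whence $\bs{H}_i \preceq \lambda_{\max}(\bs{H}_i)\,\I \preceq M\,\I$, and we set $\tilde{\mu} \coloneqq 1/M \in (0,+\infty)$.

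For the lower bound I would exploit the determinant estimate of Lemma~\ref{lem:bounds}. Discarding the nonnegative term involving $\bs{B}_i \coloneqq \bs{H}_i^{-1}$ leaves $\det(\bs{H}_{i+1}) > \det(\bs{H}_i)\,\rho^2/(p_i\,\s_i^\top\y_i + \rho)^2$, and each factor $\rho^2/(p_i\,\s_i^\top\y_i + \rho)^2$ is strictly positive and stays bounded away from zero over the finitely many steps, since $\rho > 0$, $p_i \in (0,\infty)$ and $\s_i^\top\y_i$ is finite at every step. Iterating over $i = 0,\dots,k$ gives $\det(\bs{H}_i) \ge D_{\min} > 0$ for all $i \le k+1$. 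Combining with the previous paragraph, $\lambda_{\min}(\bs{H}_i) \ge \det(\bs{H}_i)/\lambda_{\max}(\bs{H}_i)^{d-1} \ge D_{\min}/M^{d-1}$, so we set $\tilde{L} \coloneqq M^{d-1}/D_{\min} \in (0,+\infty)$, and then $\tilde{L}^{-1}\I \preceq \bs{H}_i \preceq \tilde{\mu}^{-1}\I$ for $i = 0,1,\dots,k+1$, which is the claim.

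The main obstacle is the upper-bound step: the trace estimate of Lemma~\ref{lem:bounds} is not uniform on its own, because its one-step increment still contains $\bs{H}_i$, so the real work is to absorb that dependence into a genuine recursion whose coefficients are free of $\bs{H}_i$ — which is exactly where the curvature conditions (and boundedness of the curvature pairs) must be used rather than only positive definiteness — and only afterwards to invoke the finiteness of the iteration count to close the bound. The lower-eigenvalue part is comparatively routine: it needs only that each one-step determinant shrinkage factor stays bounded away from zero, which is immediate from $\rho > 0$ and the finiteness of the $p_i$ and of $\s_i^\top\y_i$.
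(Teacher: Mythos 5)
Your argument is correct, and it reaches the same conclusion the paper does, but only the upper-eigenvalue half follows the paper's route. For the largest eigenvalue the paper, like you, bounds $\lambda_{\max}(\bs{H}_{i})\le\tr(\bs{H}_{i})$ and unrolls the trace estimate of Lemma~\ref{lem:bounds} over the finitely many iterations; it does not bother to turn this into a recursion with $\bs{H}_i$-free coefficients, since finiteness of each increment already closes the finite-$k$ claim. Your extra step of absorbing the $\bs{H}_i$-dependence via $\y_i^\top\bs{H}_i\y_i\le\lambda_{\max}(\bs{H}_i)\norm{\y_i}^2$ and $\norm{\y_i}\le L\norm{\s_i}$ is fine but uses an $L$-smoothness hypothesis on the sampled gradients that is not among the assumptions of Lemma~\ref{lem:bounds}; your own fallback (finiteness of the curvature pairs along a finite run) is what actually matches the corollary's hypotheses, and it suffices. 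For the smallest eigenvalue the routes diverge: the paper works per step, testing $\bs{H}_{k+1}$ against its bottom eigenvector and using the decomposition \eqref{eq:psd} from Lemma~\ref{lem:pd} to get $\lambda_{\min}(\bs{H}_{k+1})\ge\lambda_{\min}(\bs{H}_k)\norm{\z_{k+1}+b_k\y_k\s_k^\top\z_{k+1}}^2+(\s_k^\top\y_k+\rho/p_k)^{-1}(\z_{k+1}^\top\s_k)^2$, whence strict positivity propagates over finitely many steps; you instead combine the determinant lower bound of Lemma~\ref{lem:bounds} (which the paper proves but never uses in this corollary) with the trace upper bound, via $\lambda_{\min}\ge\det(\bs{H}_i)/\lambda_{\max}(\bs{H}_i)^{d-1}$, in the classical Byrd--Nocedal style. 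Your version buys explicit constants ($\tilde{L}=M^{d-1}/D_{\min}$) and makes the determinant estimate earn its keep, at the cost of a dimension-dependent factor and of needing the upper bound established first; the paper's per-step quadratic-form argument avoids the $M^{d-1}$ factor but leaves the constant implicit. Either way the statement is proved, so there is no gap.
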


\begin{proof}
    Let $\lambda_j^{(k+1)}$ be the eigenvalues of $\Hp$ sorted in ascending order. 
    Then, Lemma~\ref{lem:bounds} yields
    \begin{align}
        \lambda_d^{(k+1)}
        &\le \tr(\Hp) \\
        &\le         
        \tr(\Hk) +
        \left(
            \frac{
                1 + \frac{\y_k^\top \Hk \y_k}{m \norm{\s_k}^2 + \frac{\rho}{p_k}}
            }{
                m \norm{\s_k}^2 + \frac{\rho}{2 p_k}
            } 
            \right)
            \norm{\s_k}^2
            - 
            \frac{2 \s_k^\top \Hk \y_k}{M \norm{\s_k}^2 + \frac{\rho}{p_k }} \\
        &\le
        \tr(\bs{H}_0) +
        \sum_{i=0}^k
        \left[
        \left(
            \frac{
                1 + \frac{\y_i^\top \bs{H}_i \y_i}{m \norm{\s_i}^2 + \frac{\rho}{p_i}}
            }{
                m \norm{\s_i}^2 + \frac{\rho}{2 p_i}
            } 
            \right)
            \norm{\s_i}^2
            - 
            \frac{2 \s_i^\top \bs{H}_i \y_i}{M \norm{\s_i}^2 + \frac{\rho}{p_i }} 
            \right].
    \end{align}
    Similarly, let $\z_{k+1}$ be the correspondent eigenvector to the smallest eigenvalue of $\Hp$.
    Using \eqref{eq:psd},
    \begin{align}
        \lambda_0^{(k+1)}
        &=\z_{k+1}^\top \Hp \z_{k+1} \\
        &\ge 
        \lambda_0^{(k)} \norm{\z_{k+1} + b_k \y_k \s_k^\top \z_{k+1}}^2
        + \left(\frac{1}{\s_k^\top \y_k + \frac{\rho}{p_k}}\right) (\z_{k+1}^\top \s_k)^2,
    \end{align}
    thus, there exists a constant $\tilde{L}>0$ such that $\lambda_0^{(i)} \ge (\tilde{L})^{-1}$ for $i=0,1,\dots,k+1$.
\end{proof}

\section{Convergence and cost analysis}

In this section, we present a convergence analysis of SGD preconditioned by a positive-definite matrix for strongly convex smooth problems.
First, we prove in Lemma~\ref{lem:precond} some properties of preconditioned SGD.
Then, Theorem~\ref{thm:conv} uses Lemma~\ref{lem:precond} to bound the optimality gap above and establish conditions for the preconditioned SGD to reach a prescribed error tolerance $\epsilon$.
As observed in Lemma~\ref{lem:pd} and Corollary~\ref{cor:eigen_bounds}, the curvature conditions imposed on S-BFGS and L-S-BFGS guarantee that $\Hp$ remains positive definite with bounded positive eigenvalues. 
Thus, S-BFGS and L-S-BFGS satisfy the assumptions of Lemma~\ref{lem:precond} and Theorem~\ref{thm:conv}.

\begin{lem}[preconditioned SGD]
    \label{lem:precond}
    Assume a preconditioning SGD update \eqref{eq:psgd} with a positive definite preconditioning matrix $\Hk$ with bounded eigenvalues $0 < (\tilde{L})^{-1} \le \norm{\bs{H}_k} \le (\tilde{\mu})^{-1} < +\infty$ and let $\w_k^{(i)} = (\sqrt{\bs{H}_i})^{-1} \x_k$.
    Also, let $g_i(\w_k^{(i)}) \coloneqq F(\sqrt{\bs{H}_i} \w_k^{(i)}) = F(\x_k)$.
    Then, by setting $\e_k$ as the gradient error at iteration $k$, the preconditioned SGD update,
    \begin{align}
        \x_{k+1} &= \x_k - \eta_k \Hk \bs{\upsilon}_k \\
                 &= \x_k - \eta_k \Hk (\nabla F(\x_k) + \e_k)
    \end{align}
    is a linear transformation of the vanilla SGD with the gradient estimate error premultiplied by $\sqrt{\Hk^\top}$,
    \begin{align}
        \w_{k+1}^{(k)} = \w_k^{(k)} - \eta_k \left(\nabla g_k(\w_k^{(k)}) + \sqrt{\Hk^\top} \e_k \right).
    \end{align}
    Further, if $F$ is twice differentiable, strongly convex and $\Hk$ is positive definite, $g_k$ is also twice differentiable, strongly convex, and has Hessian
    \begin{align}
        \label{eq:hess_g}
        \nabla^2 g_k(\w_k^{(k)}) = \sqrt{\Hk^\top} \nabla^2 F(\x_k) \sqrt{\Hk},
    \end{align}
    which satisfies
    \begin{align}
        \label{eq:g_bounds}
        \frac{\mu}{\tilde{L}}
        \preceq
        \nabla^2 g_k(\w_k^{(k)})
        \preceq
        \frac{L}{\tilde{\mu}}.
    \end{align}
\end{lem}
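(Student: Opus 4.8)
The proof is a change of variables, so the plan is to substitute $\x_k = \sqrt{\Hk}\,\w_k^{(k)}$ into the preconditioned update and push everything through the chain rule. The only structural fact I would invoke is that, since $\Hk$ is symmetric positive definite (guaranteed by Lemma~\ref{lem:pd} and Corollary~\ref{cor:eigen_bounds}), its principal square root $\sqrt{\Hk}$ is symmetric, so $\sqrt{\Hk} = \sqrt{\Hk^\top}$ and $(\sqrt{\Hk})^{-1}\Hk = \sqrt{\Hk}$.

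First I would left-multiply $\x_{k+1} = \x_k - \eta_k \Hk(\nabla F(\x_k) + \e_k)$ by $(\sqrt{\Hk})^{-1}$, which, using the identity above and the definitions $\w_k^{(k)} = (\sqrt{\Hk})^{-1}\x_k$, $\w_{k+1}^{(k)} = (\sqrt{\Hk})^{-1}\x_{k+1}$, gives $\w_{k+1}^{(k)} = \w_k^{(k)} - \eta_k \sqrt{\Hk}(\nabla F(\x_k) + \e_k)$. Then I would differentiate $g_k(\w) = F(\sqrt{\Hk}\,\w)$ by the chain rule to get $\nabla g_k(\w) = \sqrt{\Hk^\top}\nabla F(\sqrt{\Hk}\,\w)$, and evaluate at $\w = \w_k^{(k)}$, where $\sqrt{\Hk}\,\w_k^{(k)} = \x_k$, obtaining $\nabla g_k(\w_k^{(k)}) = \sqrt{\Hk^\top}\nabla F(\x_k)$. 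Substituting this back into the rewritten update yields exactly $\w_{k+1}^{(k)} = \w_k^{(k)} - \eta_k\bigl(\nabla g_k(\w_k^{(k)}) + \sqrt{\Hk^\top}\e_k\bigr)$, which is the first claim.

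For the second part, differentiating once more gives $\nabla^2 g_k(\w) = \sqrt{\Hk^\top}\nabla^2 F(\sqrt{\Hk}\,\w)\sqrt{\Hk}$, hence \eqref{eq:hess_g} at $\w_k^{(k)}$; twice differentiability of $g_k$ is immediate since it is a twice-differentiable function composed with a linear map. For the spectral bounds I would start from the strong convexity and $L$-smoothness of $F$, namely $\mu\I \preceq \nabla^2 F(\x_k) \preceq L\I$, and conjugate both inequalities by $\sqrt{\Hk}$; since $M \mapsto \sqrt{\Hk^\top} M \sqrt{\Hk}$ preserves the Loewner order and $\sqrt{\Hk^\top}\sqrt{\Hk} = \Hk$, this gives $\mu\Hk \preceq \nabla^2 g_k(\w_k^{(k)}) \preceq L\Hk$. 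Combining with the eigenvalue bounds $(\tilde{L})^{-1}\I \preceq \Hk \preceq (\tilde{\mu})^{-1}\I$ from Corollary~\ref{cor:eigen_bounds} produces \eqref{eq:g_bounds}, and the strict positivity of $\mu/\tilde{L}$ delivers the strong convexity of $g_k$.

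There is no real obstacle here; the computation is routine. The points requiring care are purely bookkeeping: keeping the frozen preconditioner index consistent (both $\w_k^{(k)}$ and $\w_{k+1}^{(k)}$ are defined via $\Hk$, not $\bs{H}_{k+1}$), using the symmetry $\sqrt{\Hk} = \sqrt{\Hk^\top}$ each time it is needed, and checking that conjugation by a symmetric positive-definite matrix preserves the operator inequalities.
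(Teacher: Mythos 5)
Your proposal is correct and follows essentially the same route as the paper: a change of variables $\x_k = \sqrt{\Hk}\,\w_k^{(k)}$, the chain rule for $\nabla g_k$ and $\nabla^2 g_k$, and conjugation of $\mu \preceq \nabla^2 F \preceq L$ by $\sqrt{\Hk}$ combined with the eigenvalue bounds on $\Hk$ to obtain \eqref{eq:g_bounds}. The only cosmetic difference is directional—the paper verifies the equivalence by premultiplying the $\w$-space update by $\sqrt{\Hk}$ to recover the $\x$-space update, whereas you invert the other way—and your spelled-out Loewner-order argument is just a more explicit version of the paper's ``spectral norm inequalities above and below.''
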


\begin{proof}
    As $\nabla g_k(\w_k^{(k)}) =  \sqrt{\Hk^\top} \nabla F(\sqrt{\Hk} \w_k^{(k)})$,
    \begin{align}
        \w_{k+1}^{(k)} 
        &= \w_k^{(k)} - \eta_k (\nabla g_k(\w_k^{(k)}) + \sqrt{\Hk^\top} \e_k) \\
        &= \w_k^{(k)} - \eta_k  \sqrt{\Hk^\top} \nabla F(\sqrt{\Hk} \w_k^{(k)}) - \eta_k \sqrt{\Hk^\top} \e_k
    \end{align}
    Premultiplying both sides by $\sqrt{\Hk}$ yields
    \begin{align}
        \sqrt{\Hk} \w_{k+1}^{(k)}
        &= \sqrt{\Hk} \w_k^{(k)} - \eta_k  \sqrt{\Hk} \sqrt{\Hk^\top} \nabla F(\sqrt{\Hk} \w_k^{(k)})
        - \eta_k \Hk \e_k
        \\
        \x_{k+1} &= \x_k - \eta_k \Hk (\nabla F(\x_k) + \e_k).
    \end{align}
    Next, taking the second derivatives of $g$,
    \begin{align}
        \nabla^2 g_k(\w_k^{(k)}) = \sqrt{\Hk^\top} \nabla^2 F(\sqrt{\Hk} \w_k^{(k)}) \sqrt{\Hk}.
    \end{align}
    We obtain \eqref{eq:hess_g} because $\sqrt{\Hk} \w_k^{(k)}=\x_k$.
    Using the spectral norm inequalities above and below yields \eqref{eq:g_bounds}, proving that $\nabla^2 g_k$ is positive definite, thus $g$ is strongly convex.
\end{proof}

\begin{thm}[Convergence of the preconditioned SGD]
    \label{thm:conv}
    Let $F: \bb{R}^d \rightarrow \bb{R}$ be twice differentiable, $L$-smooth, and $\mu$-strongly convex, and let $\epsilon>0$ be an arbitrary tolerance on the expected optimality gap.
    Assume access to an unbiased stochastic oracle $\bs{\upsilon}_k$ such that $\Expc{\bs{\upsilon}_k}{\x_k} = \nabla F(\x_k)$, and consider the preconditioned SGD update
    \begin{align}
        \x_{k+1} = \x_k - \eta_k \Hk \bs{\upsilon}_k,
    \end{align}
    where $\Hk$ is a positive definite matrix satisfying $\norm{\Hk} \le (\tilde{\mu})^{-1}$ and
    \begin{align}
        \label{eq:bounds_g}
        \hat{\mu} \preceq \sqrt{\Hk^\top} \nabla^2 F(\x_k) \sqrt{\Hk} \preceq \hat{L}
        \quad \textrm{for all }k.
    \end{align}
    If the step size is set at a fixed value,
    \begin{align}
        \eta_k = \eta \le \min\left\{
            \frac{1}{\hat{L}},
            \frac{4}{\hat{L}}
            \left(
            \frac{1}{
                \frac{1}{\hat{\mu} \tilde{\mu}} 
                \left(\frac{\sigma^2}{(1-\alpha) \epsilon}\right)
                + 2}
            \right)    
            \right\},
    \end{align}
    then the expected optimality gap satisfies
    \begin{align}
        \Exp{F(\x_{k^*})} - F(\opt) \le \epsilon,
    \end{align}
    after
    \begin{align}
        k^*=\bigO{\hat{\kappa}\epsilon^{-1} \log(\epsilon^{-1})}
    \end{align}
    iterations, where $\hat{\kappa} = \hat{L} / \hat{\mu}$, $\sigma^2$ is an upper bound on the trace of the covariance matrix of the gradient estimate $\bs{\upsilon}_k$, and $\alpha \in (0, 1)$ is an error-splitting parameter.
\end{thm}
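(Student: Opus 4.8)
The plan is to turn each preconditioned step into an ordinary SGD step on the transformed objective $g_k$ of Lemma~\ref{lem:precond}, to use that $g_k$ shares with $F$ both its values along the iterates and its minimum $F(\opt)$, to derive from this a one-step contraction with an additive noise floor, and then to split the tolerance $\epsilon$ between that noise floor and the transient. Concretely, let $\cl{F}_k$ be the $\sigma$-algebra generated by $\x_0,\dots,\x_k$; since $\Hk$ is assembled from the optimization history it is $\cl{F}_k$-measurable, and the error $\e_k\coloneqq\bs{\upsilon}_k-\nabla F(\x_k)$ satisfies $\Expc{\e_k}{\cl{F}_k}=\bs{0}$ and $\Expc{\norm{\e_k}^2}{\cl{F}_k}\le\sigma^2$ (the conditional trace of the covariance of $\bs{\upsilon}_k$). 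By Lemma~\ref{lem:precond}, step $k$ is exactly one SGD step on $g_k$ starting from $\w_k^{(k)}$ with noise $\sqrt{\Hk^\top}\e_k$; combining \eqref{eq:hess_g} with \eqref{eq:bounds_g}, $g_k$ is $\hat{\mu}$-strongly convex and $\hat{L}$-smooth; and $g_k(\w_k^{(k)})=F(\x_k)$, $g_k(\w_{k+1}^{(k)})=F(\x_{k+1})$, with $\min g_k=F(\opt)$.

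Applying the descent lemma for the $\hat{L}$-smooth $g_k$ along $\w_k^{(k)}\to\w_{k+1}^{(k)}$, taking $\Expc{\cdot}{\cl{F}_k}$ (the cross terms in $\sqrt{\Hk^\top}\e_k$ vanish by unbiasedness and $\cl{F}_k$-measurability of $\Hk$), and bounding the transformed noise by $\Expc{\norm{\sqrt{\Hk^\top}\e_k}^2}{\cl{F}_k}=\Expc{\e_k^\top\Hk\e_k}{\cl{F}_k}\le\norm{\Hk}\sigma^2\le\tilde{\mu}^{-1}\sigma^2$, I obtain
\[
\Expc{F(\x_{k+1})}{\cl{F}_k}\le F(\x_k)-\eta\bigl(1-\tfrac{\hat{L}\eta}{2}\bigr)\norm{\nabla g_k(\w_k^{(k)})}^2+\frac{\hat{L}\eta^2\sigma^2}{2\tilde{\mu}}.
\]
Since $\eta\le 1/\hat{L}$ the bracket is positive (which, as one checks, also forces the $r$ defined below to lie in $[0,1)$), so the strong-convexity/Polyak--{\L}ojasiewicz bound $\norm{\nabla g_k(\w_k^{(k)})}^2\ge 2\hat{\mu}\,(F(\x_k)-F(\opt))$ yields the one-step contraction
\[
\Expc{F(\x_{k+1})}{\cl{F}_k}-F(\opt)\le r\bigl(F(\x_k)-F(\opt)\bigr)+B,\qquad r=1-\eta\hat{\mu}(2-\hat{L}\eta),\quad B=\frac{\hat{L}\eta^2\sigma^2}{2\tilde{\mu}}.
\]

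Taking total expectations and iterating, $a_k\coloneqq\Exp{F(\x_k)}-F(\opt)$ obeys $a_{k^*}\le r^{k^*}a_0+B/(1-r)$ with $1-r=\eta\hat{\mu}(2-\hat{L}\eta)$, hence $B/(1-r)=\hat{L}\eta\sigma^2/\bigl(2\tilde{\mu}\hat{\mu}(2-\hat{L}\eta)\bigr)$. Requiring this to be at most $(1-\alpha)\epsilon$ and solving the resulting linear inequality for $\eta$ gives exactly the second argument of the stated $\min$ --- this is where $(1-\hat{L}\eta/2)$ must be kept unrelaxed, which is what produces the ``$+2$'' in the denominator --- while $\eta\le 1/\hat{L}$ is the first argument. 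With $\eta$ so chosen, it remains to drive the transient below $\alpha\epsilon$: since $r\le 1-\eta\hat{\mu}\le e^{-\eta\hat{\mu}}$, any $k^*\ge(\eta\hat{\mu})^{-1}\log\bigl(a_0/(\alpha\epsilon)\bigr)$ gives $r^{k^*}a_0\le\alpha\epsilon$ and therefore $a_{k^*}\le\alpha\epsilon+(1-\alpha)\epsilon=\epsilon$. In the relevant small-$\epsilon$ regime $\eta=\Theta\bigl(\hat{\mu}\tilde{\mu}\epsilon/(\hat{L}\sigma^2)\bigr)$, so $(\eta\hat{\mu})^{-1}=\Theta\bigl(\hat{\kappa}\,\sigma^2/(\hat{\mu}\tilde{\mu}\epsilon)\bigr)$ and $\log(a_0/(\alpha\epsilon))=\Theta(\log\epsilon^{-1})$, which gives $k^*=\bigO{\hat{\kappa}\,\epsilon^{-1}\log(\epsilon^{-1})}$, as claimed.

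The main obstacle is the moving coordinate frame: each iteration lives in its own $\sqrt{\Hk}$-geometry, so one cannot run a single SGD recursion in fixed coordinates; the argument closes only because $g_k$ agrees with $F$ in value at $\w_k^{(k)}$ and $\w_{k+1}^{(k)}$, because all the $g_k$ share the minimum value $F(\opt)$, and because the $\cl{F}_k$-measurability of $\Hk$ keeps the noise cross-terms mean-zero. The remainder is careful bookkeeping --- in particular, propagating the factor $(1-\hat{L}\eta/2)$ without slack through the noise-floor estimate so that solving $B/(1-r)\le(1-\alpha)\epsilon$ reproduces the precise step-size formula in the statement, and tracking which of $\hat{\mu}$, $\tilde{\mu}$, $\sigma^2$, $a_0$, $\alpha$ are absorbed into the $\bigO{\cdot}$ constant in the final count.
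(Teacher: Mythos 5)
Your proposal is correct and follows essentially the same route as the paper's proof: transform each step via Lemma~\ref{lem:precond} into an SGD step on the $\hat{L}$-smooth, $\hat{\mu}$-strongly convex $g_k$, apply the descent lemma plus the Polyak--{\L}ojasiewicz bound to get the contraction with factor $1-2\hat{\mu}\eta(1-\hat{L}\eta/2)$ and noise floor $\hat{L}\eta^2\sigma^2/(2\tilde{\mu})$, then split $\epsilon$ into transient and stationary parts, which reproduces the stated step-size bound and the $\bigO{\hat{\kappa}\epsilon^{-1}\log(\epsilon^{-1})}$ iteration count. The only differences are cosmetic (your explicit $\cl{F}_k$-measurability bookkeeping and the bound $r\le 1-\eta\hat{\mu}$ in place of the paper's evaluation of $r(\eta)$ at the largest admissible step), so no further changes are needed.
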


\begin{proof}
    Given Lemma~\ref{lem:precond}, the update of the preconditioned SGD is
    \begin{align}
        \w_{k+1}^{(k)} = \w_k^{(k)} - \eta_k (\nabla g_k(\w_k^{(k)}) + \sqrt{\Hk^\top} \e_k).
    \end{align}
    Using the upper bound given by $g_k$ being $\hat{L}$-smooth,
    \begin{multline}
        g_k(\w_{k+1}^{(k)}) \le
        g_k(\w_{k}^{(k)}) 
        - \eta_k \inner{\nabla g_k(\w_{k}^{(k)}), \nabla g_k(\w_{k}^{(k)}) + \sqrt{\Hk^\top} \e_k}
        \\
        + \frac{\hat{L} \eta_k^2}{2} \norm{\nabla g_k(\w_{k}^{(k)}) + \sqrt{\Hk^\top} \e_k}^2.
    \end{multline}
    As $\bs{\upsilon}_k$ is an unbiased estimator of $\nabla F(\x_k)$, $\Expc{\e_k}{\w_k^{(k)}} = \bs{0}$. Taking the expectation conditioned on $\w_{k}^{(k)}$,
    \begin{align}
        \Expc{g_k(\w_{k+1}^{(k)})}{\w_{k}^{(k)}}
        &
        - g_k(\w^{*,(k)}) \\
        &\le
        g_k(\w_{k}^{(k)}) 
        - g_k(\w^{*,(k)})
        - \left(\eta_k - \frac{\hat{L} \eta_k^2}{2}\right) \norm{\nabla g_k(\w_{k}^{(k)})}^2
        + \frac{\hat{L} \eta_k^2 \sigma^2}{2 \tilde{\mu}}
        \\
        &\le
        \left(1 - 2\hat{\mu} \left(\eta_k - \frac{\hat{L} \eta_k^2}{2}\right) \right)
        (g_k(\w_{k}^{(k)}) - g_k(\w^{*,(k)}))
        + \frac{\hat{L} \eta_k^2 \sigma^2}{2 \tilde{\mu}}.
    \end{align}
    In the first inequality, we use
    \begin{align}
        \Expc{\norm{\sqrt{\Hk^\top} \e_k}^2}{\w_k^{(k)}} 
        \le \norm{\sqrt{\Hk^\top}}^2 \Expc{\norm{\e_k}^2}{\w_k^{(k)}} 
        \le (\tilde{\mu})^{-1} \sigma^2,
    \end{align}
    and in the last step, we apply the Polyak--{\L}ojasiewicz inequality, which holds due to $g_k$ being $\hat{\mu}$-strongly convex.
    Taking the full expectation,
    \begin{multline}
        \Exp{g_k(\w_{k+1}^{(k)}) - g_k(\w^{*,(k)})}
        \le
        \\
        \left(1 - 
        \underbrace{
            2\hat{\mu} \eta_k \left(1 - \frac{\hat{L} \eta_k}{2}\right)
        }_{r(\eta_k)}
        \right)
        \Exp{g_k(\w_{k}^{(k)}) - g_k(\w^{*,(k)})}
        + \frac{\hat{L} \eta_k^2 \sigma^2}{2 \tilde{\mu}}.
        \label{eq:r_def}
    \end{multline}
    Recalling that $g_i(\w_k^{(i)}) = F(\sqrt{\bs{H}_i} \w_k^{(i)}) = F(\x_k)$, we recover an upper bound on the optimality gap, which we then specialize to a fixed step size of $\eta_k=\eta$, allowing us to unroll the recursion,
    \begin{align}
        \Exp{F(\x_{k+1}) - F(\opt)}
        &\le
        \left(1 - r(\eta_k)\right)
        \Exp{F(\x_k) - F(\opt)}
        + \frac{\hat{L} \eta_k^2 \sigma^2}{2 \tilde{\mu}} \\
        &\le
        \left(1 - r(\eta)\right)^k
        \Exp{F(\x_0) - F(\opt)}
        + \frac{\hat{L} \eta^2 \sigma^2}{2 \tilde{\mu}} \sum_{i=0}^k (1-r(\eta))^i \\
        &\le
        \left(1 - r(\eta)\right)^k
        \Exp{F(\x_0) - F(\opt)}
        + \frac{\hat{L} \eta^2 \sigma^2}{2 \tilde{\mu}} \sum_{i=0}^\infty (1-r(\eta))^i \\
        &=
        \left(1 - r(\eta)\right)^k
        \Exp{F(\x_0) - F(\opt)}
        + \frac{\hat{L} \eta^2 \sigma^2}{2 \tilde{\mu} r(\eta)}.
    \end{align}

    To obtain $\Exp{F(\x_{k+1}) - F(\opt)} \le \epsilon$, we split the error as
    \begin{align}
        \begin{cases}
            \left(1 - r(\eta)\right)^k
            \Exp{F(\x_0) - F(\opt)} &\le \alpha \epsilon \\
            \frac{\hat{L} \eta^2 \sigma^2}{2 \tilde{\mu} r(\eta)} &\le (1 - \alpha) \epsilon,
        \end{cases}
        \label{eq:error_split}
    \end{align}
    for $\alpha \in (0, 1)$.
    For the first inequality to be satisfied, the step size must satisfy $0<\eta<2/\hat{L}$ and the number of iterations required to reach $\alpha \epsilon$ is $k^* = \bigO{(r(\eta))^{-1} \log(\epsilon^{-1})}$.
    The optimal step size is $\eta=1/\hat{L}$, resulting in $r(\eta) = \hat{\mu} / \hat{L}$.
    The second inequality in \eqref{eq:error_split} can be rewritten by substituting $r(\eta)$ as defined in \eqref{eq:r_def},
    \begin{align}
        \eta \le \frac{4}{\hat{L}}
            \underbrace{
            \left(
            \frac{1}{
                \frac{1}{\tilde{\mu} \hat{\mu}} 
                \left(\frac{\sigma^2}{(1-\alpha) \epsilon}\right)
                + 2}
            \right)
            }_{a}.
            \label{eq:a_def}
    \end{align}
    Taking the largest step possible according to \eqref{eq:a_def} and using the definition of $r$ as in \eqref{eq:r_def} yields
    \begin{align}
        r(\eta)
        &=
        (\hat{\kappa})^{-1} 8a(1-2a)
        >
        (\hat{\kappa})^{-1} 8a,
    \end{align}
    where $\hat{\kappa} \coloneqq \hat{L} / \hat{\mu}$.
    Therefore, $k^*=\bigO{\hat{\kappa}a^{-1} \log(\epsilon^{-1})}$. Recalling that $a=\bigO{\epsilon}$ concludes the proof.
\end{proof}

From \eqref{eq:bounds_g}, we observe that as $\Hk$ approaches the inverse of the Hessian of the objective function at iteration $k$, the transformed condition number $\hat{\kappa}$ approaches one. 
If the curvature condition parameter $m$ is set such that $\tilde{\mu} \gg \mu$, $\Hk$ might \emph{not} converge to the true Hessian inverse, but the effect of noise is mitigated. 
Noise is reduced for $\tilde{\mu} > 1$. 
Therefore, a trade-off exists: selecting a large $m$ value avoids noise amplification but results in a higher $\hat{\kappa}$, whereas choosing a smaller $m$ value decreases $\hat{\kappa}$ but increases the risk of noise amplification.
\section{Numeric examples}
First, we will solve a noisy quadratic problem with S-BFGS, BFGS, and SGD.

The primary objective of the first example is to demonstrate the effectiveness of the S-BFGS preconditioner in enhancing the performance of SGD, while mitigating the noise amplification and stability problems encountered when naively employing BFGS equations.
The second example involves training a multinomial logistic regression model on several datasets using L-S-BFGS and two other limited-memory quasi-Newton methods (i.e., SdLBFGS~\cite{wang2017stochastic} and oLBFGS~\cite{mokhtari2015global}).
The aim is to demonstrate that L-S-BFGS retains the advantages of S-BFGS while maintaining favorable scalability for large-dimensional problems. 

Apart from the two parameters that require tuning in S-BFGS, (i.e., the likelihood distribution parameter $\rho$ and the curvature condition parameter $m$), L-S-BFGS requires setting the memory size parameter $r$.
For consistency, we fix the memory size at $r=10$. 
The values of $m$ and $\rho$ are individually optimized for each problem.
Given the known smoothness of the objective function $L$ in the three numerical experiments, an extra curvature condition is imposed in \eqref{eq:extra_curv_condition} with $\tilde{L}=L$. A workstation with 80 processors and an Intel Xeon Gold 6230 processor operating at 2.10 GHz with 252 GB of memory was used to compute the presented numeric results.

\subsection{Quadratic problem}
This simple toy problem was devised to test the Hessian inverse learning capabilities of S-BFGS and BFGS.
Let
\begin{equation}
    f(\bs{x}, \bs{\xi}) = \frac{1}{2} \bs{x}^\top \bs{A} \bs{x} - \bs{x}^\top \bs{1}_d (1 + \bs{x}^\top \bs{\xi}),
\end{equation}
where $\bs{A} \in \bb{R}^{d \times d}$ is a positive-definite matrix with eigenvalues $\lambda$ such that $log_{10}(\lambda) \sim \cl{U}[0, 6]$, $\bs{1}_d$ is a vector of ones with size $d$, and $\bs{\xi} \sim \cl{N}(\bs{0}_d, \bs{\Sigma})$.
The noise covariance matrix is sampled from a Wishart distribution as $\bs{\Sigma} = \cl{W}_p(\bs{V}=10^{-2} \I, n=d)$.
The same realizations of $\bs{A}$ and $\bs{\Sigma}$ are applied when comparing methods.
Moreover, the same starting point $\bs{x}_0 \sim \cl{N}(\bs{0}_d, \I_d)$ is employed.
Here, we use $d=20$ and enforce that the condition number of the problem is $\kappa=10^6$.

The S-BFGS, BFGS, and SGD methods are employed to solve this problem. In all cases, the batch size is $N=10$. 
For S-BFGS and BFGS, the fixed step size is $\eta=0.7$, and they start with a diagonal matrix with all diagonal components equal to $1/L$, where $L=10^6$. 
For S-BFGS, $\rho=100$ and $m = 10^5$. 
For SGD, the fixed step size is $\eta = 1/L$.

Figure~\ref{fig:quadratic} presents the optimality gap vs. iterations for SGD and S-BFGS (top-left), BFGS and S-BFGS (top-right), the eigenvalues profiles for BFGS and S-BFGS (bottom-left), and a measure of distance $\Psi(\cdot) = \tr(\cdot) - \log(\det(\cdot))$ of the preconditioned Hessian to the identity, see Byrd and Nocedal~\cite{byrd1989tool}(bottom-right).
The top-left plot presents the improvement of S-BFGS over SGD, which advances slowly.
In comparison, BFGS using noisy gradients diverges, as can be seen in the top-right plot.
Moreover, BFGS-generated iterates oscillate with a much higher amplitude than those of S-BFGS.
Although the eigenvalues of the BFGS approximation are closer to those of the true inverse Hessian, that might not be an advantage, as revealed in the top-right plot of Figure~\ref{fig:quadratic}.
The more conservative inverse Hessian approximation of S-BFGS seems more suitable for stochastic optimization.
Finally, the bottom-right plot shows that $\Hk$ gets closer to $(\nabla^2 F)^{-1}$ as iterations go.

\begin{figure}[h]
    \includegraphics[width=.95\linewidth]{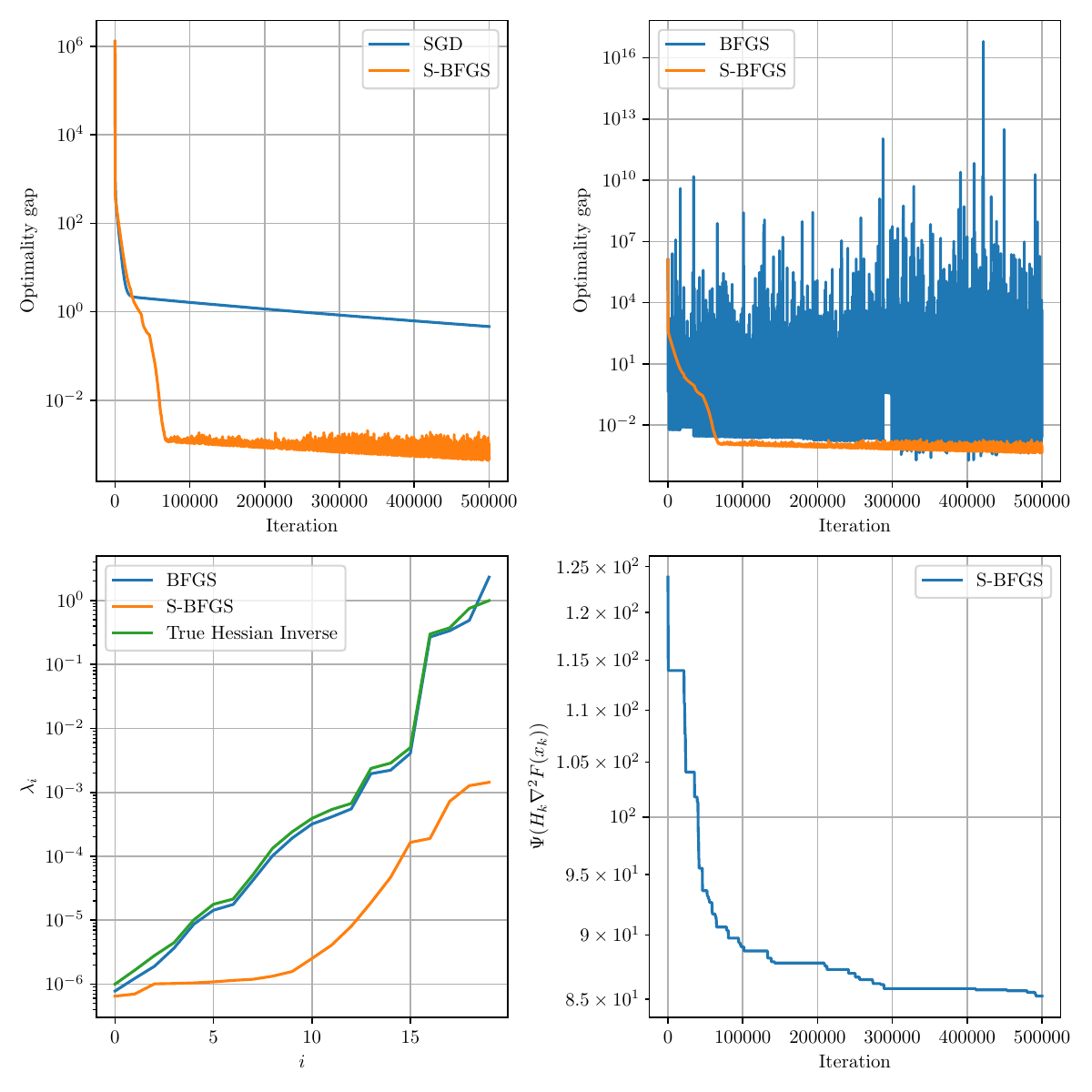}
    \caption{Quadratic problem with a conditioning number of $10^6$: optimality gap vs. iterations for SGD and S-BFGS (top-left), BFGS and S-BFGS (top-right), eigenvalue profiles (bottom-left), and a measure of distance $\Psi$ between $\Hk \nabla^2 F(\x_k)$ and $\I$ (bottom-right).
    The SGD method becomes stuck and does not progress, possibly due to the large condition number of the problem.
    The S-BFGS method converges much faster in the initial iterations until it reaches the asymptotic regime, as predicted by Theorem~\ref{thm:conv}.
    Vanilla BFGS, however, better approximates the Hessian eigenvalues but fails to converge due to noise amplification.
    }
    \label{fig:quadratic}
\end{figure}

\section{Logistic regression}

Here, we compare L-S-BFGS with two other limited-memory quasi-Newton methods, SdLBFGS~\cite{wang2017stochastic} and oLBFGS~\cite{mokhtari2015global}, on training a logistic regression model with $L^2$-regularization.
To make the comparison as fair as possible, we have chosen methods without variance reduction, use fixed step sizes, and a memory size $r=10$ in all cases.
All the datasets were obtained from OpenML\footnote{\url{https://www.openml.org/}}.
In all cases, we employ a regularization parameter $\lambda_{lr} = 10^{-5}$ and a mini-batch of size $N=10$.
For L-S-BFGS, we use a fixed step-size $\eta=0.7$, where for the other methods we tune the fixed step-size for each problem.
A step-size sensibility analysis is presented in the appendices.
Both L-S-BFGS and oLBFGS start with a diagonal matrix with diagonal components set to $1/L$, where $L$ is the smoothness of the objective function, and SdLBFGS starts with the identity matrix, as proposed by \cite{wang2017stochastic}.
The parameters used for each method and each problem are presented in Table~\ref{tab:param}.
\begin{table}[h]
    \centering
    \caption{Logistic regression example: parameters used for different datasets.}
    \label{tab:param}
    \begin{tabular}{llrrrr}
        \toprule
        &                & madelon   & mushrooms & MNIST     & CIFAR-10 \\ \midrule
       \multirow{3}{*}{L-S-BFGS} 
        &$m$   & $10^{-2}$ & $10^{-4}$ & $10^{-4}$ & $10^{-4}$ \\
        &$\rho$      & $10^{-4}$ & $10^2$    & $1.0$       & $10^{-3}$ \\
        &$\eta$      & $0.7$     & $0.7$    & $0.7$       & $0.7$ \\ \midrule
        \multirow{2}{*}{SdLBFGS}
        &$\delta$   & $10^{-3}$ & $10^{-2}$          & $10^{-2}$ & $10^{-2}$ \\
        &$\eta$      & $0.1$    & $5 \times 10^{-2}$ & $10^{-2}$       & $10^{-1}$ \\ \midrule
        \multirow{1}{*}{oLBFGS}
        &$\eta$   & $5 \times 10^{-5}$ & $10^{-3}$ & $3 \times 10^{-3}$ & $10^{-2}$ \\
        \bottomrule
    \end{tabular}
\end{table}

All methods were run 50 times each with the same starting point $\bs{x}_0=\bs{O}_d$ to provide statistical confidence intervals.
Figure~\ref{fig:logreg} presents the optimality gap vs. both epochs (right) and runtimes (left) for the logistic regression examples.
Moreover, Figure~\ref{fig:logreg} presents the confidence intervals of $50\%$ and $90\%$. 
The L-S-BFGS method can converge faster than the baseline methods in all tested cases, even with condition numbers on the order of $10^{12}$. 
Notably, L-S-BFGS works well even with a short memory of $r=10$, a small fraction of the dimensionality of the problems, which can be as large as $d=30,720$ in CIFAR-10.

\begin{figure*}[h!]
    \includegraphics[width=.49\linewidth]{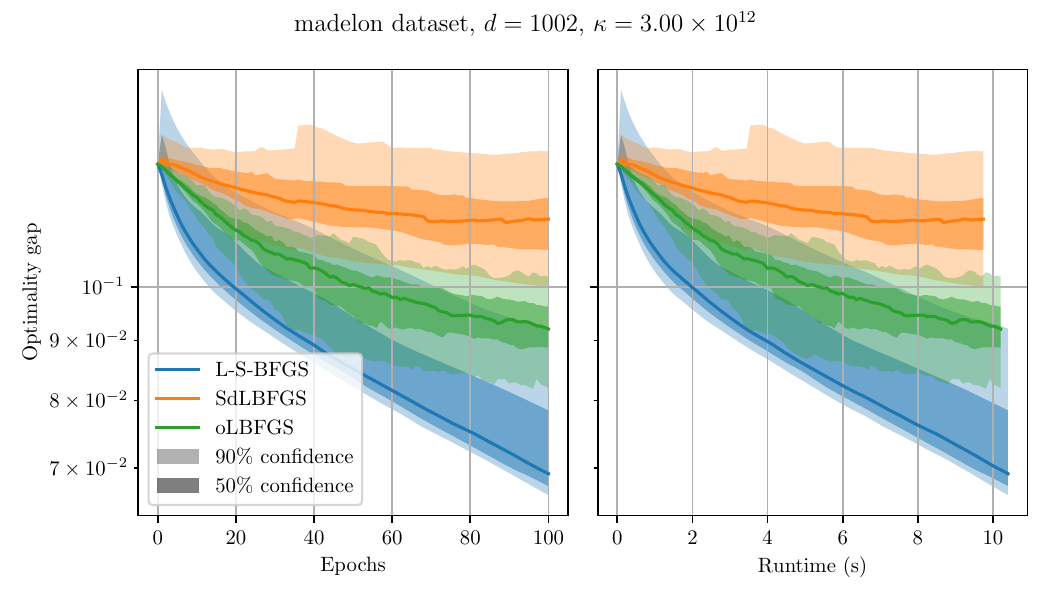}
    \includegraphics[width=.49\linewidth]{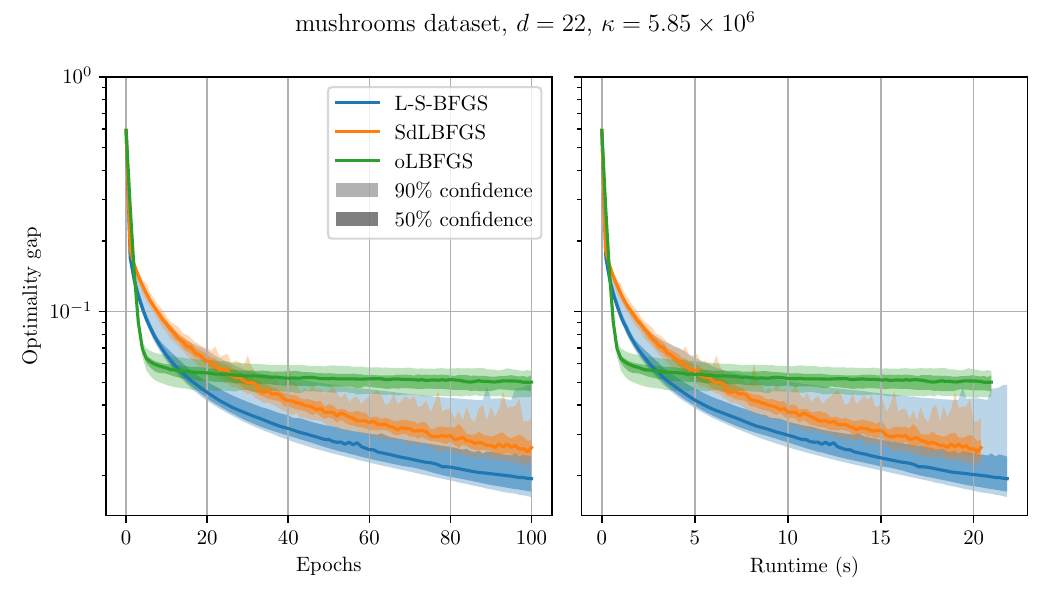}
    \includegraphics[width=.49\linewidth]{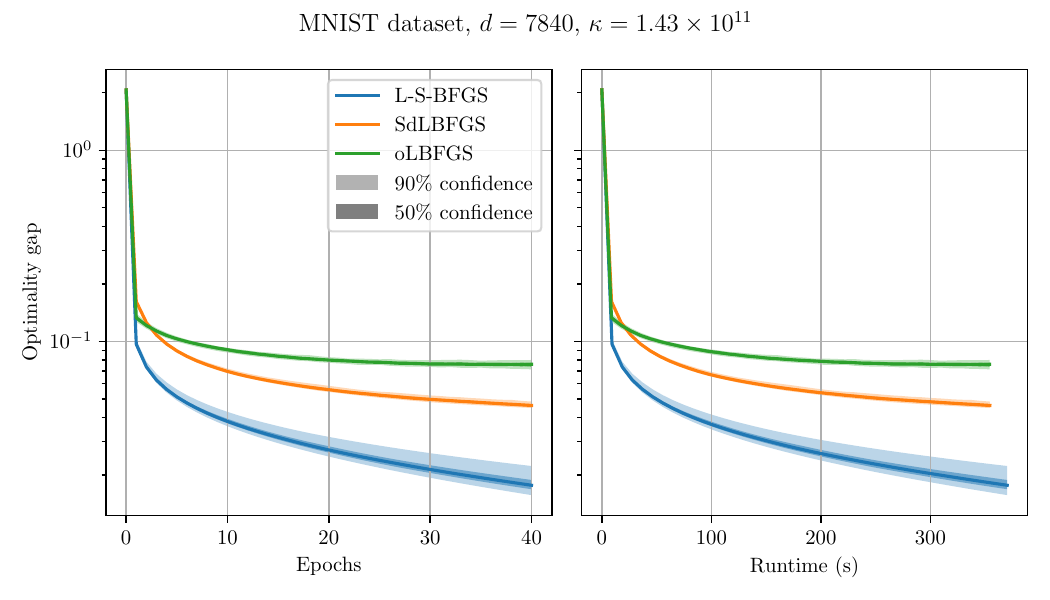}
    \includegraphics[width=.49\linewidth]{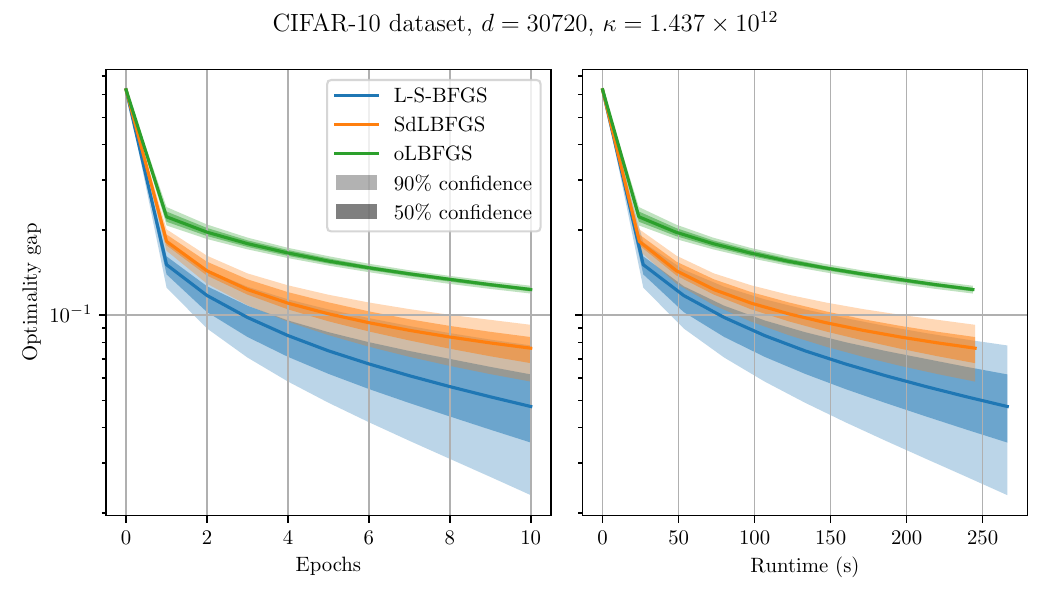}
    \caption{Logistic regression: Optimality gap vs. epoch for 50 independent runs of L-S-BFGS, SdLBFGS, and oLBFGS. Light shade indicates the confidence interval of $90\%$; darker shade presents the confidence interval of $50\%$; and solid lines indicate median values.
    The robustness of L-S-BFGS to noise allows larger step sizes, offering an advantage in comparison to the baseline methods. 
    As SdLBFGS also has mechanisms to control noise, it performs better than oLBFGS, which requires step sizes as small as $3 \times 10^{-3}$ to converge in the MNIST experiment.
    }
    \label{fig:logreg}
\end{figure*}

\section*{Acknowledgments}
This work was supported by the KAUST Office of Sponsored Research (OSR) under Award No. URF/1/2584-01-01 and the Alexander von Humboldt Foundation.

\section{Conclusion}
In this work, we propose a new methodology for deriving quasi-Newton methods for stochastic optimization.
We use Bayesian inference to incorporate noisy gradient observations into our approximation of the inverse Hessian by building a prior distribution for the inverse Hessian and a likelihood of observing the curvature pairs.
Furthermore, we derived a stochastic counterpart to BFGS that converges to the deterministic BFGS as the gradient noise diminishes by selecting an appropriate prior distribution.

Although the computational cost of S-BFGS is comparable to that of BFGS, our L-S-BFGS algorithm has a cost of $\mathcal{O}(dm^2)$ for a memory size $m$, compared to $\mathcal{O}(dm)$ for L-BFGS.
Nevertheless, in practice, L-S-BFGS outperforms other baseline quasi-Newton methods on large-scale problems, as it is more robust to noise, which allows it to take larger steps.

The present work makes several contributions.
On a theoretical level, this work introduces a new class of quasi-Newton methods for stochastic optimization using Bayesian inference and provides a convergence analysis of the preconditioned stochastic gradient descent (SGD). 
On a practical level, this work evaluates these methods and demonstrates their performance improvements in numerical experiments.

Despite the robustness of our proposed methods, setting parameters is still a burden.
The two parameters required by our methods are not dimensionless, meaning their choice depends on the specific problem at hand. 
Future research should explore coupling our preconditioner with existing SGD variants, such as SVRG~\cite{johnson2013accelerating}, SARAH~\cite{nguyen2017sarah}, or MICE~\cite{carlon2024multi}.
In these variants, gradient differences are already computed and could be used to enhance our quasi-Newton methods.

\appendix
\section{Step size sensitivity analysis}

The choice of step-size is critical in the convergence of the proposed method and other quasi-Newton methods used as baselines. 
Figure~\ref{fig:steps} presents the comparison of a single run using various step sizes for the logistic regression problem.

\begin{figure*}[h]
    \includegraphics[width=.48\linewidth]{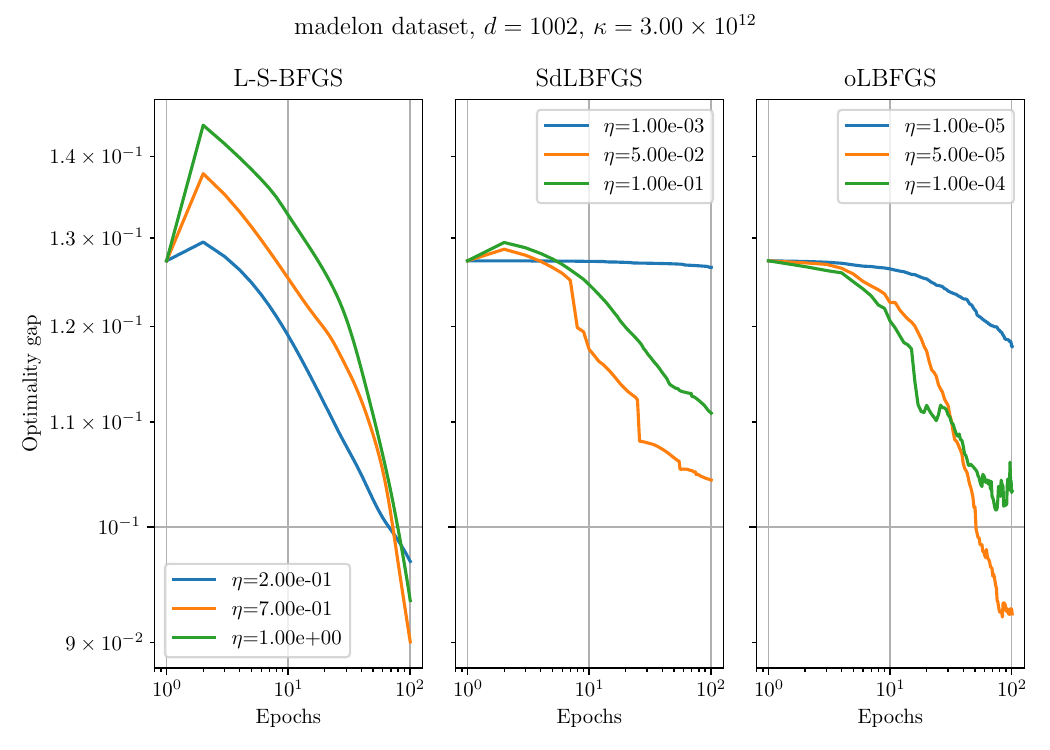}
    \includegraphics[width=.48\linewidth]{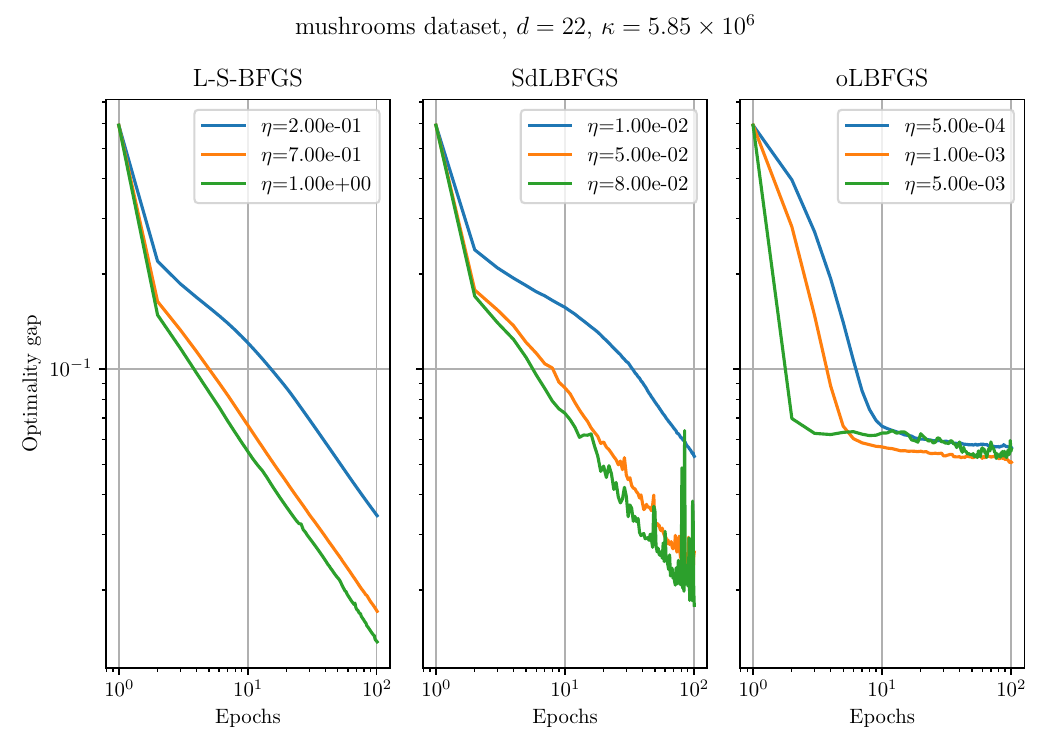} \\
    \includegraphics[width=.48\linewidth]{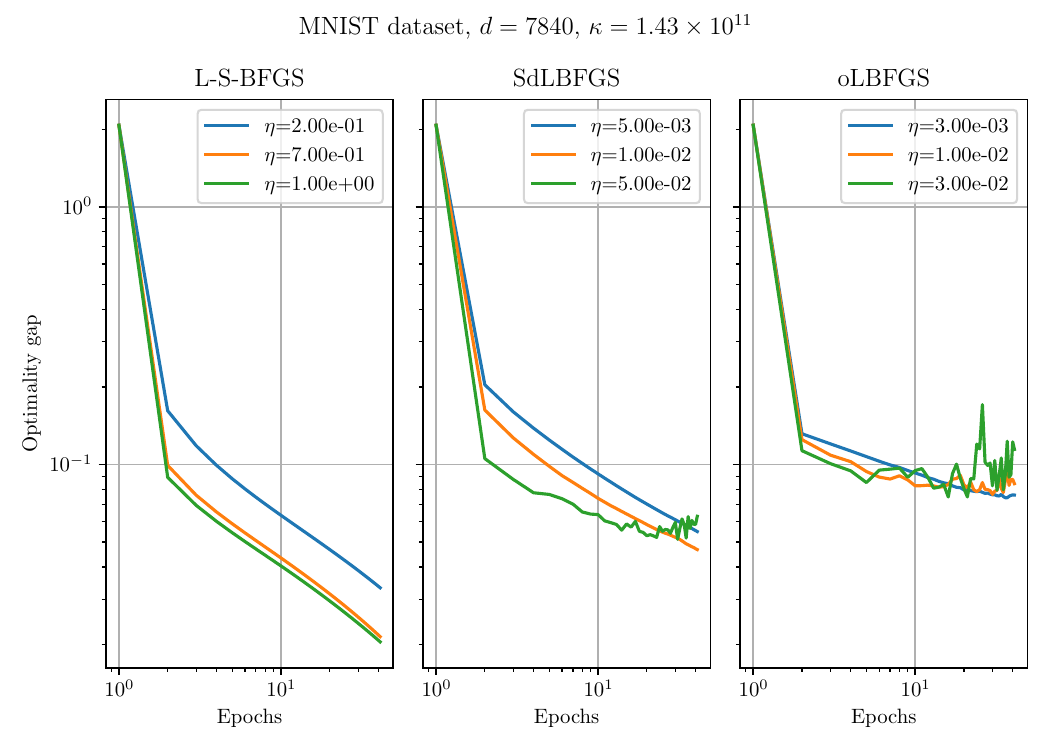}
    \includegraphics[width=.48\linewidth]{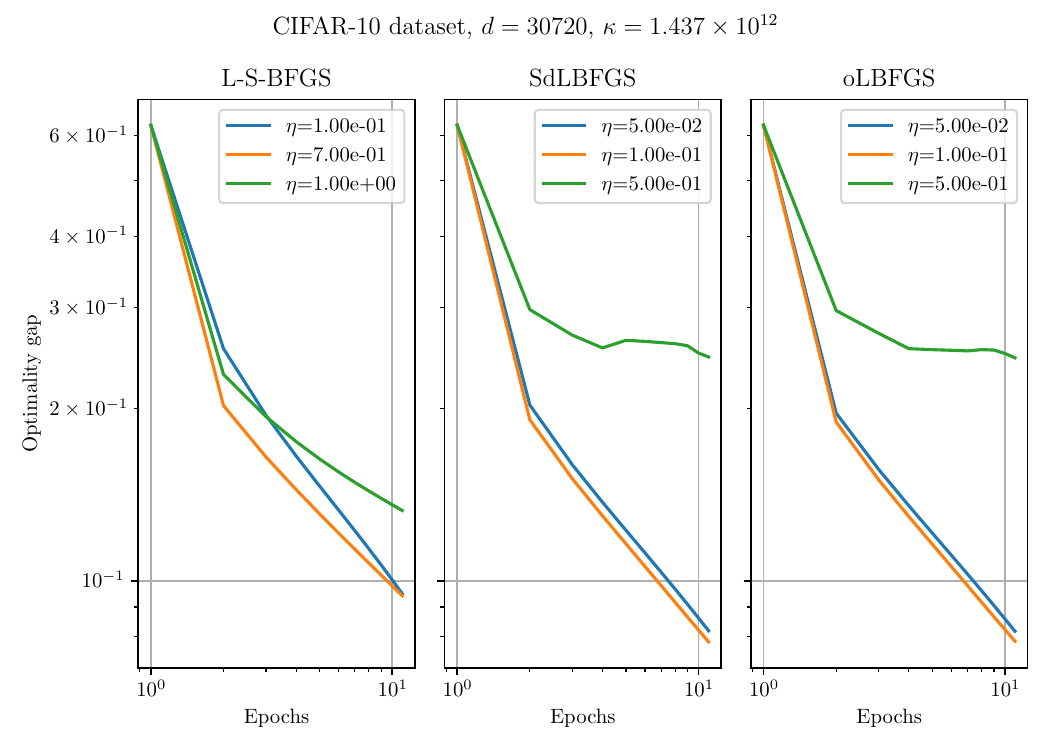}
    \caption{Optimality gap versus epochs for the different methods using different step sizes.
    }
    \label{fig:steps}
\end{figure*}

The sensitivity analysis confirms that L-S-BFGS is more robust to large choices of step sizes than the baseline stochastic quasi-Newton methods.

\bibliographystyle{siamplain}
\bibliography{references}
\end{document}